\def \R{\mbox{${\mathbb R}$}}
\def \S{\mbox{${\mathbb S}$}}
\def \H{\mbox{${\mathbb H}$}}
\def \Q{\mbox{${\mathbb Q}$}}
\newcommand{\rank}{\text{rank}}
\newcommand{\co}{\colon}
\newcommand{\Iso}{\mathrm{Iso}}
\titleformat{\subsection}[runin]
{\bfseries} {\thesubsection{.}}{0.15cm}{}[.]
\titleformat{\subsubsection}[runin]
{\em}{\thesubsubsection{.}}{0.15cm}{}[.]
\newtheorem{theorem}{Theorem}[section]
\newtheorem{lemma}[theorem]{Lemma}
\newtheorem{thm}{Theorem}
\newtheorem{claim}[theorem]{Claim}
\theoremstyle{definition}
\newtheorem{definition}[theorem]{Definition}
\newtheorem{remark}[theorem]{Remark}
\newcommand{\PfThm}[1]{{\noindent\em Proof of Theorem #1}.}
\newcommand{\EPf}{\hfill$\square$\newline}
\newcommand{\EPfC}{\hfill$\triangle$\newline}
\numberwithin{equation}{section}
\numberwithin{figure}{section}
\begin{document}

\fancyhead[LO]{Complete cohomogeneity one hypersurfaces of $\H^{n+1}$}
\fancyhead[RE]{Felippe Guimarães, Fernando Manfio, Carlos E. Olmos}
\fancyhead[RO,LE]{\thepage}

\thispagestyle{empty}

\begin{center}
{\bf \LARGE Complete cohomogeneity one hypersurfaces of $\H^{n+1}$}
\vspace*{5mm}

\hspace{0.2cm} {\Large Felippe Guimarães, Fernando Manfio and Carlos E. Olmos} 
\end{center}


\begin{quote}
{\small
\noindent {\bf Abstract}\hspace*{0.1cm}
    We study isometric immersions $f\co M^n \rightarrow \mathbb{H}^{n+1}$ into hyperbolic space of dimension $n+1$ of a complete Riemannian manifold of dimension $n$ on which a compact connected group of intrinsic isometries acts with principal orbits of codimension one. We provide a characterization if either $n \geq 3$ and $M^n$ is compact, or $n \geq 5$ and the connected components of the set where the sectional curvature is constant and equal to $-1$ are bounded.
}
\\

{\small
\noindent {\textbf{Mathematics Subject Classification:}}\hspace*{0.1cm}
    53C42, 53C40, 53C30.
}
\\
{\small
\noindent {\textbf{Keywords:}}\hspace*{0.1cm}
    Cohomogeneity one manifolds, hypersurfaces, hyperbolic space.
}

\end{quote}

\section{Introduction}

The study of homogeneous \(n\)-dimensional submanifolds \(M^n\), i.e., where the isometry group \(\text{Iso}(M^n)\) acts transitively on \(M^n\), was initiated by Kobayashi in \cite{KobayashiCompactHomRH}, where the author proved that every compact homogeneous Euclidean hypersurface must be a round sphere. The complete classification of homogeneous hypersurfaces in simply connected Riemannian manifolds with constant sectional curvature \(c \in \mathbb{R}\), denoted by \(\mathbb{Q}^{n+1}_c\), without the compactness hypothesis, was later achieved in the works \cite{NaganoTakahashiHomEuclidean,TakahashiCod1Hom,TakahashiCod1Hom3,TakagiTakahashiHomSphere,HsiangLawsonMinimalHomogeneous}. Subsequent studies relaxed the homogeneity assumption, focusing on isometric immersions \(f : M^n \to \mathbb{Q}^{n+1}_c\) where a \emph{compact}, \emph{connected} subgroup \(G\) of \(\text{Iso}(M^n)\) acts with orbits of codimension one. Such \(f\) is referred to as a \emph{hypersurface of \(G\)-cohomogeneity one}. In this work, we will always assume that \(G\) is compact unless stated otherwise. In \cite{PodestaSpiroCohomEuc}, it was shown that compact Euclidean hypersurfaces of \(G\)-cohomogeneity one with umbilical principal orbits must be rotational (cf. definition \ref{def:rotational}). Later articles, such as \cite{AspertiMercuriNoronhaRot,MercuriSeixasCohoRevol}, have introduced other conditions on the orbits, also leading to the hypersurfaces being rotational.

A more complete understanding of Euclidean hypersurfaces of \(G\)-cohomogeneity one was obtained in \cite{MercuriPodestaSeixasTojeiroCohomogeneityEuclidean}, where they dropped the restrictive assumption that principal orbits be umbilical and generalized the previous results on the topic (cf. \cite{TojeiroSurveyGManifolds} and reference therein for a discussion), and it can be stated as follows.

\begin{theorem}[Theorem 1.1 in \cite{MercuriPodestaSeixasTojeiroCohomogeneityEuclidean}]
    Let \(f\co M^n \to \mathbb{R}^{n+1}\) be a complete hypersurface of \(G\)-cohomogeneity one. Assume either that \(n\geq 3\) and \(M^n\) is compact or that \(n\geq 5\) and the connected components of the flat part of \(M^n\) are bounded. Then \(f\) is either rigid or a hypersurface of revolution.
\end{theorem}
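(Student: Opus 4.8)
The plan is to read off the extrinsic geometry from the relative nullity distribution and to exploit the rigidity forced by the cohomogeneity one action. Write $A$ for the shape operator of $f$ with respect to a local unit normal, let $\Delta(x)=\ker A_x$ be the relative nullity space and $\nu(x)=\dim\Delta(x)$ the index of relative nullity, so the type number is $\tau(x)=n-\nu(x)$. The Gauss equation for a Euclidean hypersurface reads $R(X,Y)=AX\wedge AY$, and the classical Beez--Killing argument shows that $R$ determines $A$ up to sign wherever $\tau\geq 3$. Hence, on the open set $M_3=\{\tau\geq 3\}$, every intrinsic isometry automatically preserves $A$ up to sign; in particular each $g\in G$ is realized there by an ambient isometry, and $f|_{M_3}$ is locally rigid and $G$-equivariant. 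The first step is to record this and split the problem according to whether $M_3$ is empty or not, reducing the analysis to the complementary set $M_2=\{\tau\leq 2\}$, on which $\nu\geq n-2$.

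Next I would analyse the nullity foliation on $M_2$ by means of the relative nullity theory of Dajczer--Gromoll. Where $\tau=2$, the distribution $\Delta$ is smooth, integrable and of rank $n-2$; its leaves are totally geodesic in $M$ and are carried by $f$ onto open subsets of affine $(n-2)$-planes of $\R^{n+1}$, and the splitting tensor controls how these planes vary. Here completeness enters decisively: the hypothesis that $M$ be compact, or that $n\geq 5$ with bounded flat components $\{\tau\leq 1\}$, is exactly what forces the nullity leaves to be complete and keeps $\nu$ from jumping, the bound $n-2\geq 3$ being what makes the leaves large enough to run the completeness arguments. I would then confront this leaf structure with the orbit foliation of $G$: since the principal orbits have codimension one while the nullity leaves have dimension $n-2$, the two foliations meet in a controlled configuration, and the $G$-invariance of the intrinsic curvature shows that the nonzero sectional curvatures, hence the product $\lambda_1\lambda_2$ of the two nonzero principal curvatures, are constant along orbits.

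The heart of the matter, and the step I expect to be the main obstacle, is to upgrade this to a global statement on $M_2$. Because $\tau=2$ is precisely the non-rigid regime, the two principal curvatures $\lambda_1,\lambda_2$ are not individually determined by the intrinsic data, so one must track them directly along the normal geodesic $\gamma$ that meets every orbit orthogonally and integrate $\Delta$ along $\gamma$. The goal is to show that the affine $(n-2)$-planes foliating $f(M_2)$ either are mutually parallel, giving a generalized cylinder that under the stated hypotheses is excluded or falls under the rigid alternative, or all pass through a common affine subspace, in which case $f$ is a hypersurface of revolution in the sense of Definition~\ref{def:rotational}. The genuinely delicate points are the gluing of the rank-two region to $M_3$ across their common boundary and the passage through singular orbits: one must verify that the rotation axis produced on the low-rank part and the rigid $G$-equivariant structure coming from $M_3$ are mutually consistent and extend across these loci. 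It is here that the boundedness of the flat components is indispensable, since it excludes the intermediate configurations in which neither alternative holds.

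Assembling the cases then finishes the argument: if $M_3$ is nonempty, connectedness of $M$ together with the propagation of the equivariant rigid structure across $M_2$ forces $f$ to be globally rigid; otherwise $\tau\leq 2$ everywhere and the nullity analysis exhibits $f$ as a hypersurface of revolution.
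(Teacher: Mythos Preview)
The paper does not prove this Euclidean theorem; it quotes it from \cite{MercuriPodestaSeixasTojeiroCohomogeneityEuclidean} as background, and its own Theorem~\ref{thm:Main} is the hyperbolic analogue. The proof the paper gives for Theorem~\ref{thm:Main}, which mirrors the strategy of the cited reference, is structurally different from what you propose, and your proposal contains a genuine gap.

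Your final dichotomy is wrong. You assert that if $M_3=\{\tau\ge 3\}$ is nonempty then $f$ is globally rigid, and that otherwise $\tau\le 2$ everywhere and the nullity analysis gives a revolution. But a generic hypersurface of revolution has $\tau=n$ on an open dense set, so $M_3\neq\emptyset$, and yet it need not be rigid; indeed the theorem's content is precisely that the deformable examples are the rotational ones. Beez--Killing gives only \emph{local} rigidity on $M_3$; the obstruction to globalizing it is whether the set $B^f$ of totally geodesic points disconnects $M$. That is exactly what Sacksteder's theorem (Theorem~\ref{thm:Sacksteder}) supplies, and you never invoke it. Your ``propagation of the equivariant rigid structure across $M_2$'' has no mechanism: two connected components of $M\setminus B^f$ can carry opposite signs in $A^f=\pm A^{\tilde f}$, and there is nothing in your argument that reconciles them.

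The actual route, both in the cited Euclidean paper and in the proof of Theorem~\ref{thm:Main} here, is as follows. First one treats separately the case of a complete relative nullity leaf of dimension $n-1$ or $n-2$: the hypothesis on the flat part rules out the former, and the latter is handled by a dedicated proposition (in the Euclidean case one gets $M=\mathbb{S}^2\times\R^{n-2}$ immersed canonically; here it is Theorem~\ref{thm:Rothyper}). With no such leaf, Sacksteder's theorem says $f$ is rigid unless $B^f$ disconnects $M$. In the disconnecting case one takes a principal orbit $\Sigma_p$ through a regular point of $B^f$, shows $B^f$ is $G$-invariant so that $\Sigma_p\subset B^f$, and then uses the Codazzi equation together with a reduction-of-codimension argument and the classification of compact homogeneous hypersurfaces to conclude that $\Sigma_p$ is umbilical in $M$. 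Once the principal orbits are umbilical, the already-known umbilical-orbit theorem (Theorem~\ref{thm:PrincipalsUmb} here) yields the rotational structure. Your attempt to read off an axis directly from the affine $(n-2)$-planes of the nullity foliation bypasses all of this and does not stand on its own.
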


Note that the assumption concerning the boundedness of the connected components of the flat part is necessary. Without this assumption, it is possible to construct flat cylinders over planar curves that do not meet the conclusions of the theorem. In their result, they made use of an unpublished version of the main result in \cite{SackstederHyp} due to Ferus (cf. Theorem 13.2 in \cite{base}). To be able to use such a result, they needed to understand the hypersurface of \(G\)-cohomogeneity one that has at least one complete \((n-2)\)-dimensional leaf of the relative nullity distribution, i.e., the kernel of the second fundamental form. They showed that, as in the case of homogeneous hypersurfaces, they must be isometric to \(M^n = \S^2 \times \mathbb{R}^{n-2}\) and the isometric immersion is canonical in \(\mathbb{R}^{n+1}\), i.e., as a product of a hypersurface and the Euclidean factor.

Hypersurfaces of \(G\)-cohomogeneity one in the hyperbolic space with umbilical principal orbits were studied in \cite{AspertiCaputiCohomogeneityHyperbolic}. They showed that, when \(G\) is compact, the hypersurface must be rotational. In this work, we address complete hypersurfaces of \(G\)-cohomogeneity one in hyperbolic space without assuming the umbilicity in the orbits. As these orbits will be homogeneous submanifolds with codimension two in the hyperbolic space, we employ the tools developed in \cite{spaceFormsNoronhadeCastro} to show that the main result in \cite{MercuriPodestaSeixasTojeiroCohomogeneityEuclidean}, as stated below, holds in the hyperbolic setting.

\begin{thm}\label{thm:Main}
    Let \(f\co M^n \to \mathbb{H}^{n+1}\) be a complete hypersurface of \(G\)-cohomogeneity one. Assume either that \(n\geq 3\) and \(M^n\) is compact or that \(n\geq 5\) and the connected components of the set where the sectional curvature is constant and equal to $-1$ are bounded. Then \(f\) is either rigid or a rotational submanifold.
\end{thm}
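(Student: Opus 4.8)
The plan is to follow the strategy of the Euclidean theorem of \cite{MercuriPodestaSeixasTojeiroCohomogeneityEuclidean}, replacing the flat ambient tools with the intrinsic description of homogeneous codimension-two submanifolds of space forms developed in \cite{spaceFormsNoronhadeCastro}. First I would record the basic structure of the action: since the principal orbits have codimension one in $M^n$, the orbit space $M^n/G$ is a one-dimensional manifold, and a unit-speed geodesic $\gamma$ meeting the orbits orthogonally parametrizes them as $P_t = G\cdot\gamma(t)$, each $P_t$ being a homogeneous hypersurface of $M^n$. Under $f$ each principal orbit becomes a homogeneous submanifold $f(P_t)\subset\H^{n+1}$ of codimension two, whose normal space in $\H^{n+1}$ is spanned by $f_*\gamma'$ and a unit normal $N$ to $f(M)$. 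The key preliminary observation is that, by the Gauss equation for a hypersurface of $\H^{n+1}$, the sectional curvature of the plane spanned by principal directions $e_i,e_j$ equals $-1+\lambda_i\lambda_j$, so the set where the sectional curvature is constant and equal to $-1$ is exactly $\{x\in M : \nu(x)\geq n-1\}$, where $\nu$ is the index of relative nullity of $f$. This is the precise hyperbolic analogue of the flat part, and it is to this set that the boundedness hypothesis applies.

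The heart of the argument lies on the open set $\{\nu\leq n-2\}$. There the principal orbits are homogeneous submanifolds of $\H^{n+1}$ of codimension two carrying enough curvature, and I would invoke the structure theory of \cite{spaceFormsNoronhadeCastro} to classify them. The expected dichotomy is that $f(P_t)$ is either a \emph{substantial} orbit whose second fundamental form rigidly prescribes the immersion, or an orbit contained in a totally umbilical (or totally geodesic) hypersurface of $\H^{n+1}$ that arises as the orbit of a rotation subgroup. In the latter case the intrinsic isometries in $G$ extend to extrinsic isometries of $\H^{n+1}$ fixing an axis, forcing $f$ to be rotational in one of the three hyperbolic senses (spherical, parabolic, or hyperbolic rotation); one then checks that the profile curve in the fixed totally geodesic $\H^2$ is well defined along $\gamma$.

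In the complementary, substantial case I would establish rigidity. Where $\nu\leq n-2$ and $M^n$ is complete, the unpublished Ferus sharpening of Sacksteder's theorem (Theorem 13.2 in \cite{base}), transplanted to the hyperbolic setting through the results of \cite{spaceFormsNoronhadeCastro}, shows that $f$ is determined up to an isometry of $\H^{n+1}$ by the induced metric, i.e. $f$ is rigid. Completeness guarantees that the relevant relative-nullity leaves and the orbit foliation extend globally, while the dimension bounds ($n\geq 3$ with $M^n$ compact, or $n\geq 5$ in the noncompact case) supply the index gap needed for both the rigidity theorem and the homogeneous-submanifold classification to be conclusive.

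The final and most delicate step is the global assembly across the degenerate set $\{\nu\geq n-1\}$. On each connected component of $\{\nu\leq n-2\}$ the previous analysis yields either rigidity or a rotational structure, and I must show that exactly one alternative holds on all of $M^n$. The main obstacle is controlling the behavior along the boundary of the degenerate set: a priori two rotational pieces could be joined through a curvature $-1$ region with incompatible axes, which is precisely the phenomenon producing the cylinder counterexamples in the Euclidean case. Here the compactness of $M^n$, or the boundedness of the components of $\{\nu\geq n-1\}$ together with $n\geq 5$, is used to propagate the axis of rotation—or the rigidity data—across these regions by a connectedness and continuity argument on the orbit space $M^n/G$, producing a single rotation subgroup of $\Iso(\H^{n+1})$ (respectively a single congruence class) and hence the global conclusion that $f$ is rigid or rotational.
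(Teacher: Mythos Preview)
Your outline diverges from the paper's actual proof in two essential places, and both are genuine gaps rather than stylistic differences.

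First, you never confront the case of a complete leaf of relative nullity of dimension $n-2$. Sacksteder's theorem (Theorem~\ref{thm:Sacksteder}) requires the \emph{non-existence} of complete relative-nullity leaves of dimension $n-1$ \emph{or} $n-2$; it is not a statement about the open set $\{\nu\le n-2\}$. The boundedness hypothesis on $\{\nu\ge n-1\}$ rules out complete $(n-1)$-leaves, but it says nothing about complete $(n-2)$-leaves, and such leaves do occur in the hyperbolic setting (unlike the homogeneous case). The paper handles this with a separate structural result, Theorem~\ref{thm:Rothyper}, whose proof is the bulk of the work: one shows that in the presence of a complete $(n-2)$-leaf the principal orbits decompose as products of spheres and $f$ is forced to be a warped product, hence rotational. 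Your proposal contains no analogue of this step, so Sacksteder cannot be invoked and the argument stalls.

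Second, once Sacksteder \emph{is} available, it does not give rigidity outright: its conclusion is a dichotomy, either $f$ is rigid or the totally geodesic set $B^f$ disconnects $M^n$. The paper's rotational conclusion comes from the second branch. One shows that $B^f$ is $G$-invariant, picks a principal orbit $\Sigma_p^{n-1}\subset B^f$, and proves that $\tilde\xi=f_*\xi$ is parallel in the normal connection of $f|_{\Sigma_p}$; by codimension reduction $f(\Sigma_p)$ then lies in a totally geodesic $\mathbb H^n$, hence is a round sphere by the classification of compact homogeneous hypersurfaces, so $\Sigma_p$ is \emph{intrinsically umbilical} in $M^n$. At that point Theorem~\ref{thm:PrincipalsUmb} finishes the job. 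Your proposed orbit-by-orbit ``substantial versus lies in an umbilical slice'' dichotomy is not the mechanism: rigidity is obtained globally from Sacksteder, not from substantiality of individual orbits, and the rotational case is obtained by proving umbilicity of the orbits \emph{in $M^n$}, not by extending $G$ to an ambient rotation group and then performing a delicate axis-matching across $\{\nu\ge n-1\}$. The ``global assembly'' paragraph is therefore addressing a difficulty that the paper's argument never creates.
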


\begin{remark}
    In the case where the hypersurface is rigid in Theorem \ref{thm:Main}, it will be an extrinsically cohomogeneity one hypersurface, and such hypersurfaces are well-known in \(\mathbb{H}^{n+1}\) (cf. \cite{BerndtMartinaCohomogeneity}). When \(f\) is a rotational submanifold, it will be the orbit of a polar action whose section contains the profile submanifold (cf. \cite{BerndtConsoleOlmosBook,gorodski2022topics}).
\end{remark}

Similar to the work that inspired this one, it is necessary to understand the hypersurfaces that admit a complete leaf of dimension \(n-2\) from the distribution of relative nullity. Unlike the homogeneous case, where such hypersurfaces do not exist as shown in \cite{TakahashiCod1Hom}, examples appear in the scenario of hypersurfaces with cohomogeneity one. These are necessarily rotational submanifolds, not over a profile curve, but over a 3-dimensional Riemannian manifold as the profile.

\begin{thm} \label{thm:Rothyper}
    Let \(f\co M^n \rightarrow \mathbb{H}^{n+1}\), with \(n \geq 5\), be a complete hypersurface of \(G\)-cohomogeneity one. If there exists a complete leaf of relative nullity with dimension \(n-2\), then \(M^n\) is isometric to \(\R \times_{\rho_0} \S^2_{c_1} \times_{\rho_1} \S^{n-3}_{c_2}\), for some constants \(c_1 > 0\) and \(c_2 > 0\) and warped functions \(\rho_0\) and \(\rho_1\) defined on \(\R\). Additionally, there exists a warped product representation \(\Psi\co V^{1+i_0} \times_{\sigma_0} \mathbb{Q}^{2+i_1}_{\tilde{c_1}} \times_{\sigma_1} \S^{n-3}_{c_2} \rightarrow \mathbb{H}^{n+1}\), where \(i_0 + i_1 = 1\) and \(V^{1+i_0}\) is an open subset of a totally geodesic submanifold, with warping functions \(\sigma_0\) and \(\sigma_1\) defined on \(V^{1+i_0}\), in which \(f\) admits an extrinsic warped product structure. More precisely, there exist isometric immersions \(f_0\co \mathbb{R} \rightarrow V^{1+i_0}\) and \(f_1\co \S^2_{c_1} \rightarrow \mathbb{Q}^{2+i_1}_{\tilde{c_1}}\) such that \(\rho_0 = \sigma_0 \circ f_0\), \(\rho_1 = \sigma_1 \circ f_1\), and \(f = \Psi \circ (f_0 \times f_1 \times Id)\), where \(Id\) is the identity map defined on \(\S^{n-3}_{c_1}\). Moreover, \(\nu = n - 2\) throughout the submanifold and the leaves of relative nullity are given by \(\mathbb{R} \times_{\rho_0} \{p\} \times_{\rho_1} \mathbb{S}^{n-3}_{c_2}\) for \( p \in \mathbb{S}^2_{c_1}\). In particular, \(f\) is a rotational submanifold.
\end{thm}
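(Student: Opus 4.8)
The plan is to proceed in three stages: first establish that the relative nullity is globally of dimension $n-2$, then use the homogeneity of the orbits together with the codimension-two theory of \cite{spaceFormsNoronhadeCastro} to pin down their geometry, and finally assemble the intrinsic and extrinsic warped product structures. Write $\Delta=\ker A$ for the relative nullity distribution, where $A$ is the shape operator of $f$, so that $\nu=\dim\Delta=n-\rank A$. On the open set $U=\{\nu=n-2\}$, which is nonempty since it contains the given complete leaf $L^{n-2}$, the distribution $\Delta$ is smooth, integrable and autoparallel, its leaves are totally geodesic in $M^n$, and $f$ carries them into open subsets of totally geodesic copies of $\H^{n-2}\subset\H^{n+1}$; in particular $f|_{L}$ is a local isometry of the complete manifold $L$ onto $\H^{n-2}$, so $f(L)$ is noncompact. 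Because $G$ acts by isometries extending to $\H^{n+1}$, the operator $A$ is $G$-equivariant, whence $\Delta$, $\Delta^\perp$ and $\nu$ are $G$-invariant; in particular $\nu$ is constant on each orbit.

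For a principal orbit $N^{n-1}=G\cdot p$ meeting $U$, I would view $N$ as a homogeneous submanifold of $\H^{n+1}$ of codimension two, with normal directions the unit normal $\xi$ of $f$ and the profile direction $\gamma'$ of the cohomogeneity one action. Since $f(L)$ is noncompact while $f(N)$ is compact, $L$ cannot be contained in $N$, so $\Delta\not\subset TN$ and $A|_{TN}$ has rank two with $(n-3)$-dimensional kernel $\Delta\cap TN$. Feeding this rank/nullity pattern into the structure theory of codimension-two homogeneous submanifolds of \cite{spaceFormsNoronhadeCastro}, I would conclude that $N$ splits, isometrically and extrinsically, as a product of round spheres $\S^2_{c_1}\times\S^{n-3}_{c_2}$ with $c_1,c_2>0$: the factor $\S^2_{c_1}$ carries the rank-two part, so that $\Delta^\perp\subset TN$ is its tangent bundle, while $\S^{n-3}_{c_2}$ lies in the relative nullity. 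The inclusion $\Delta^\perp\subset TN$ is equivalent to $\gamma'\in\Delta$, i.e.\ the profile direction lies in the relative nullity.

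Now, since $\gamma'\in\Delta$ on $U$ and $L$ is a complete totally geodesic leaf, the whole normal geodesic $\gamma$ is contained in $L\subset U$; as $\gamma$ meets every orbit and $\nu$ is $G$-invariant, this forces $\nu\equiv n-2$ on all of $M^n$. To recover the metric, I would reconstruct $M^n$ from $\gamma$: the $G$-invariant, mutually orthogonal, integrable distributions $\R\gamma'$, $\Delta\cap TN$ and $\Delta^\perp$ have as leaves the geodesic $\gamma$, the spheres $\S^{n-3}_{c_2}$ and the spheres $\S^2_{c_1}$ respectively, and evaluating the splitting tensor of $\Delta$ and the second fundamental forms of these three distributions along $\gamma$ identifies the metric as the multiply warped product $dt^2+\rho_0(t)^2\,g_{\S^2_{c_1}}+\rho_1(t)^2\,g_{\S^{n-3}_{c_2}}$, so that $M^n\cong\R\times_{\rho_0}\S^2_{c_1}\times_{\rho_1}\S^{n-3}_{c_2}$; completeness makes $\rho_0,\rho_1$ defined on all of $\R$, and the nullity leaves are exactly $\R\times_{\rho_0}\{p\}\times_{\rho_1}\S^{n-3}_{c_2}$. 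Finally, applying the warped product reduction of \cite{spaceFormsNoronhadeCastro} for isometric immersions — using that $f$ sends $\Delta^\perp$ to the rank-two part and the nullity leaves to totally geodesic submanifolds — yields the representation $\Psi\co V^{1+i_0}\times_{\sigma_0}\Q^{2+i_1}_{\tilde{c}_1}\times_{\sigma_1}\S^{n-3}_{c_2}\to\H^{n+1}$ with $i_0+i_1=1$, together with $f_0\co\R\to V^{1+i_0}$ and $f_1\co\S^2_{c_1}\to\Q^{2+i_1}_{\tilde{c}_1}$ satisfying $\rho_0=\sigma_0\circ f_0$, $\rho_1=\sigma_1\circ f_1$ and $f=\Psi\circ(f_0\times f_1\times\mathrm{Id})$; that $f$ is rotational then follows by exhibiting the polar section containing the profile $f_1$.

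I expect the last stage to be the main obstacle. The delicate points are controlling the splitting tensor of $\Delta$ along the complete nullity leaves through its Riccati equation in ambient curvature $-1$, so as to produce the warping functions $\rho_0,\rho_1$ and simultaneously separate the two cases $i_0=1$ and $i_1=1$, and matching the intrinsic multiply-warped decomposition with the correct extrinsic warped product reduction. The second stage is also genuinely nontrivial, since it requires invoking the codimension-two homogeneous classification precisely enough to force the round sphere factors, the inclusion $\Delta^\perp\subset TN$, and the signs $c_1,c_2>0$.
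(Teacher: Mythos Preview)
There is a genuine gap at the very first step. You write ``Because $G$ acts by isometries extending to $\H^{n+1}$, the operator $A$ is $G$-equivariant,'' but this is precisely what is \emph{not} given: $G$ is only assumed to be a subgroup of $\Iso(M^n)$, and nothing in the hypotheses ensures that these intrinsic isometries extend to the ambient hyperbolic space. Consequently the shape operator $A$ of $f$ need not be $G$-equivariant, and the principal orbit $N=G\cdot p$, although intrinsically homogeneous, is not a priori an \emph{extrinsically} homogeneous codimension-two submanifold of $\H^{n+1}$. Your entire second stage---feeding $N$ into the classification of homogeneous codimension-two submanifolds to extract the product-of-spheres splitting and the crucial inclusion $\Delta^\perp\subset TN$ (equivalently $\gamma'\in\Delta$)---therefore rests on an unjustified hypothesis.

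The paper works around exactly this difficulty. What \emph{is} $G$-invariant is the nullity of the curvature tensor, which coincides with $\Delta$ wherever $\rank A\geq 2$; this is how the type-number-two locus is shown to be $G$-invariant without any equivariance of $A$. On a principal orbit $\Sigma_p\subset\H^{n+1}$ only the shape operator $A_{\tilde{\xi}}$ in the direction tangent to $M$ is automatically $G$-equivariant; the shape operator $A_{\tilde{\eta}}$ in the direction normal to $f$ need not be, and the paper explicitly splits into the equivariant and non-equivariant cases. The conclusion $\xi\in\Delta^f$ is reached only after a chain of claims establishing parallelism of $\tilde\eta$ along $\ker A_{\tilde\eta}$, autoparallelism of $\ker A_{\tilde\eta}$, and a reduction-of-codimension argument identifying the leaves of $\ker A_{\tilde\eta}$ as round spheres---none of which you can shortcut by assuming extrinsic homogeneity. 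You also do not address the case $\rank A_{\tilde\eta}\leq 1$ on the orbit, which in the paper leads to umbilical principal orbits and is then excluded via Lemma~\ref{lem:WarpedNull}; your noncompactness argument for $L$ only rules out $\Delta\subset TN$, not this configuration.
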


The manuscript is organized as follows: in section \ref{sec:Pre} we introduce the notation to be used, as well as results from the literature that play a key role in this work. In section \ref{sec:Proofs}, we present the proofs of the main theorems.
\section{Preliminaries}\label{sec:Pre}

Let $f: M^n \to \Q^{n+p}_c$, where $c \in \R$ and $n \geq 2$, be an isometric immersion of a Riemannian manifold. For a point $x \in M^n$ and a normal vector $\xi \in T_xM^\perp$, we denote by $\alpha_f(x)$ the second fundamental form of $f$, and by $A^f_\xi(x)$ the shape operator of $f$ with respect to $\xi$, respectively. These are related by the equation
\[
\langle A^f_\xi X,Y\rangle = \langle\alpha_f(X,Y),\xi\rangle, \ \text{for all} \ X,Y \in T_xM.
\]
\noindent The \emph{relative nullity subspace} $\Delta\left(x\right) \subset T_xM$ of $f$ at $x \in M^n$ is the kernel of its second fundamental form at $x \in M^n$, specifically,
\begin{equation*}
    \Delta \left(x\right)=\left\{X\in T_xM:\alpha_f\left(X,Y\right)=0, \ \text{ for all }Y
    \in T_xM\right\}.
\end{equation*}
The dimension $\nu(x)$ of the subspace $\Delta(x)$ is called the \emph{index of relative nullity} at $x \in M^n$. 

We say that $x\in M^n$ is a {\em totally geodesic point} of $M^n$ if $\nu(x)=n$. If $f$ has constant index of relative nullity on an open subset $U\subset M^n$, then $\Delta$ is an autoparallel distribution, that means
\[
\nabla_T S \in \Gamma(\Delta), \ \text{for all } \ 
T,S\in \Gamma(\Delta).
\]
Moreover, its leaves are totally geodesic submanifolds of $M^n$ and their images under $f$ are totally geodesic submanifolds of $\Q^{n+1}_c$. The index of relative nullity $\nu$ is upper semicontinuous and, in particular, the subset $M_0 =\{x \in M^n: \nu(x)=\nu_0\}$ where $\nu$ attains its minimum value $\nu_0$ is open. Moreover, if \( M^n \) is complete, then the leaves of relative nullity are also complete (cf. \cite{FerusNullityComplete}).

We are now in a position to state the version of {\em Sacksteder's theorem} that will be used in this work. Recall that $f$ is said to be
\emph{rigid} if any other isometric immersion 
$g\co M^n\to\Q^{n+p}_c$ is congruent to $f$ by an isometry of the ambient space $\mathbb{Q}^{n+p}_c$. That is, there exists an isometry $\mathcal{I} \in \text{Iso}( \mathbb{Q}^{n+p}_c)$ such that $g = \mathcal{I} \circ f$.

\begin{theorem}[Theorem 13.2 in \cite{base}] \label{thm:Sacksteder}
Let \( f\co M^n \to \Q^{n+1}_c \) be an isometric immersion of a complete Riemannian manifold with \( n \geq 3 \) and \( c \leq 0 \). If there exists no complete leaf of relative nullity of dimension \( n-1 \) or \( n-2 \), then for any other isometric immersion \( \tilde{f} \co M^n \to \Q^{n+1}_c \), the following holds:
\begin{enumerate}[(i)]
    \item The set \( B^f \) of totally geodesic points of \( f \) coincides with that of \( \tilde{f} \).
    \item On each connected component of \( M \setminus B^f \), the shape operators of \( f \) and \( \tilde{f} \) satisfy \( A^f = \pm A^{\tilde{f}} \).
\end{enumerate}
In particular, if \( B^f \) does not disconnect \( M^n \), then \( f \) is rigid.
\end{theorem}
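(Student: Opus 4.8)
The plan is to combine the pointwise rigidity contained in the Gauss equation with a propagation argument along the relative nullity foliation governed by the Codazzi equation, with completeness entering through Ferus' theorem. Since $f$ and $\tilde f$ induce the same metric on $M^n$, they share the same intrinsic curvature tensor $R$, so the Gauss equations of the two hypersurfaces give, for all $X,Y,Z,W$,
\[
\langle A^f X,W\rangle\langle A^f Y,Z\rangle-\langle A^f X,Z\rangle\langle A^f Y,W\rangle=\langle A^{\tilde f} X,W\rangle\langle A^{\tilde f} Y,Z\rangle-\langle A^{\tilde f} X,Z\rangle\langle A^{\tilde f} Y,W\rangle,
\]
both sides being equal to $\langle R(X,Y)Z,W\rangle - c\big(\langle X,W\rangle\langle Y,Z\rangle-\langle X,Z\rangle\langle Y,W\rangle\big)$. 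The first step is the classical Beez--Killing algebraic lemma: at any point where the type number $\rank A^f\ge 3$, this identity forces $A^{\tilde f}=\pm A^f$. Hence on the open set $U=\{\rank A^f\ge 3\}$ the two shape operators agree up to a sign $\varepsilon$ that is locally constant by continuity, and $U=\{\rank A^{\tilde f}\ge 3\}$.

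Two further pointwise facts reduce the problem to the low-rank locus. First, the common expression above vanishes precisely when $\rank A^f\le 1$, so $\{\rank A^f\le 1\}$ is the intrinsic locus where the sectional curvature is identically $c$; in particular it coincides with $\{\rank A^{\tilde f}\le 1\}$. Second, wherever $\rank A^f\ge 2$ the relative nullity $\Delta^f=\ker A^f$ equals the nullity of the (intrinsic) tensor on either side of the displayed identity, so $\Delta^f=\Delta^{\tilde f}$ there; thus $f$ and $\tilde f$ share the same relative nullity geodesics on $M\setminus\{\rank A^f\le 1\}$. Now set $\nu_0=\min\nu$ and $M_0=\{x:\nu(x)=\nu_0\}$, which is open with complete totally geodesic leaves by Ferus' theorem. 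If $\nu_0\in\{n-1,n-2\}$, these would be complete leaves of relative nullity of dimension $n-1$ or $n-2$, contradicting the hypothesis; hence either $\nu_0=n$, in which case $f$ is totally geodesic, $M\setminus B^f=\varnothing$, and (ii) is vacuous, or $\nu_0\le n-3$ and $M_0\subseteq U$.

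The technical heart, and the step I expect to be hardest, is to upgrade $A^{\tilde f}=\varepsilon A^f$ from $U$ to the rank-one and rank-two loci and to identify $B^f$ with $B^{\tilde f}$. There the Gauss equation no longer determines the shape operator, so I would argue along the relative nullity leaves: because $M^n$ is complete the leaf geodesics extend indefinitely, the Codazzi equation turns the shape operator along such a geodesic into the solution of a Riccati-type ODE, and the same ODE is satisfied by $A^{\tilde f}$ since the two immersions share their relative nullity geodesics. The hypothesis excluding complete $(n-1)$- and $(n-2)$-dimensional leaves is exactly what rules out the classical bending (generalized cylinder) configurations and forces each leaf of the non-minimal strata to terminate at a point where the type number jumps. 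Tracking the locally constant sign $\varepsilon$ across these strata --- combining the continuity of $A^f$ and $A^{\tilde f}$ along $\partial U$ with integration of the Codazzi ODE along the incomplete leaves --- should give $A^{\tilde f}=\varepsilon A^f$ on each connected component of $M\setminus B^f$, which is (ii), and simultaneously that $A^{\tilde f}$ vanishes exactly where $A^f$ does, which is (i). Here the hypotheses $n\ge 3$ (for the Beez--Killing lemma) and $c\le 0$ (for the identification of $\Delta^f$ with an intrinsic nullity and for the behaviour of the Riccati equation) are genuinely needed.

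For the final assertion, if $B^f$ does not disconnect $M^n$ then $M\setminus B^f$ is connected and $\varepsilon$ is a single global sign. Composing $\tilde f$ with an orientation-reversing isometry of $\Q^{n+1}_c$ when $\varepsilon=-1$, I may assume $A^{\tilde f}=A^f$ on $M\setminus B^f$, and hence on all of $M^n$, since both shape operators vanish on $B^f$. As $f$ and $\tilde f$ then induce the same metric and the same shape operator and satisfy the Gauss and Codazzi equations, the fundamental theorem of hypersurfaces in $\Q^{n+1}_c$ yields an isometry $\mathcal I\in\mathrm{Iso}(\Q^{n+1}_c)$ with $\tilde f=\mathcal I\circ f$; that is, $f$ is rigid.
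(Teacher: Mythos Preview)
The paper does not contain a proof of this theorem. Theorem~\ref{thm:Sacksteder} is stated in Section~\ref{sec:Pre} as a preliminary result quoted from the literature (Theorem~13.2 in \cite{base}, attributed in the introduction to an unpublished argument of Ferus extending Sacksteder's theorem). It is used as a black box in the proofs of Theorems~\ref{thm:Main} and~\ref{thm:ion}, so there is no proof in the paper to compare your proposal against.

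That said, your outline follows the standard strategy for this class of rigidity results: Beez--Killing on the locus $\{\rank A^f\ge 3\}$, identification of the rank-$\le 1$ locus as intrinsic, coincidence of relative nullity with curvature nullity where $\rank\ge 2$, and completeness of the minimal-index leaves via Ferus. Where your sketch remains genuinely incomplete is precisely the part you flag: extending the sign $\varepsilon$ across the rank-two stratum and showing $B^f=B^{\tilde f}$. The actual argument there is more delicate than ``integrate the Codazzi ODE along incomplete leaves''; one must show that any maximal leaf of relative nullity in the rank-two set, being incomplete by hypothesis, has a limit point where the rank jumps to at least $3$, and then use continuity of the splitting tensor (not just of $A^f$) to carry $\varepsilon$ back. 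Your Riccati heuristic is the right intuition but does not by itself rule out a leaf limiting into the rank-$\le 1$ locus rather than into $U$. If you want to turn this into a full proof, consult the treatment in \cite{base}.
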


\begin{remark}
    Theorem \ref{thm:Sacksteder} also holds when the ambient space is the sphere \(\S^{n+1}\) without any restriction on the leaves of relative nullity, but it requires \(n\geq 4\) (cf. \cite[Lemma 32]{FloritGuimaraesCMH}).
\end{remark}

We briefly recall the notion of a warped product of isometric immersions into a space form $\Q_c^{l}$. This relies on the warped product representations of $\Q_{c}^{l}$, that is, isometries of warped products onto open subsets of $\Q_{c}^{l}$. All such isometries were described by N{\"{o}}lker \cite{NolkerWarped} for warped products with arbitrarily many factors. In particular, any isometry of a warped product with two factors onto an open subset of $\mathbb{Q}_{c}^{l}$ arises as a restriction of an explicitly constructible isometry
\[
\Psi:V^{l-m}\times_{\sigma} \mathbb{Q}_{\tilde{c}}^{m}\to \mathbb{Q}_{c}^{l}
\]
onto an open dense subset of $\mathbb{Q}_{c}^{l}$, where $\mathbb{Q}_{\tilde{c}}^{m}$ is a complete spherical submanifold of $\mathbb{Q}_{c}^{l}$ and $V^{l-m}$ is an open subset of the unique totally geodesic submanifolds $\mathbb{Q}_{c}^{l-m}$ of $\mathbb{Q}_{c}^{l}$, whose tangent space at some point $\overline{z} \in\Q_{\tilde{c}}^{m}$ is the orthogonal complement of the tangent space of $\mathbb{Q}_{\tilde{c}}^{m}$ at $\overline{z}$. The isometry $\Psi$ is completely determined by the choice of $\mathbb{Q}_{\tilde{c}}^{m}$ and of a point $\overline{z} \in\Q_{\tilde{c}}^{m}$, and it is called the {\em warped
product representation} of $\Q^l_c$ determined by these data.

\begin{definition} \label{def:rotational}
An isometric immersion 
$f\co N_0^{n_0}\times_\rho N_1^{n_1}\to\Q^{n+p}_c$ is said to be an {\em extrinsic warped product} if there exist isometric immersions $f_0\co N_0^{n_0}\to V^{n_0+l_0}$ and $f_1\co N_1^{n_1} \to \mathbb{Q}^{n_1+l_1}_{\tilde{c}}$, with $n_0+n_1=n$, and a warped product representation $\Psi\co  V^{n_0+l_0}\times_\sigma\Q^{n_1+l_1}_{\tilde{c}}\to\Q^{n+p}_c$, with $l_0+l_1=p$ and $\rho = \sigma\circ f_0$, and such that
$f = \Psi \circ \left(f_0\times f_1\right)$.
Equivalently, if the following diagram commutes.

\bigskip

\begin{picture}(150,84)\hspace{-15ex}
\put(180,30){${N_0}^{n_0}\,\times_{\rho}\, {N_1}^{n_1}$}
\put(192,55){$f_0$} \put(245,32){\vector(1,0){135}}
\put(385,30){$\Q^{n+p}_c$} \put(234,55){$f_1$}
\put(285,45){$\circlearrowright$}
\put(260,20){$f=\Psi\circ(f_0\times f_1)$}
\put(187,42){\vector(0,1){30}} \put(229,42){\vector(0,1){30}}
\put(178,80){$V^{n_0+l_0}\times_\sigma \mathbb{Q}^{n_1+l_1}_{\tilde{c}}$}
\put(255,80){\vector(3,-1){125}} \put(315,65){$\Psi$}
\end{picture}
\vspace*{-2ex}

\noindent If $l_1=0$, then $f$ is called a \textit{rotational submanifold} with profile $f_0$. We point out that this differs subtly from another in which it is referred to as a {\em hypersurface of revolution} in the case of the profile submanifold $f_0$ is a curve (cf. \cite[Definition 2.2]{doCarmoDajczerRotation}).
\end{definition}

The following lemma was established in \cite[Lemma 3.8]{PodestaSpiroCohomEuc} for the case in which the ambient space is the Euclidean space, and the same ideas can be applied to other space forms. Since the result will be used in the proof of Theorem \ref{thm:Rothyper}, we present a proof for the general case here. 

\begin{lemma} \label{lem:WarpedNull}
    Let $I\subset\R$ be an open interval, $N^{n-1}$ be a Riemannian manifold with constant section curvature $\tilde{c}$, and consider the warped product $M^n=I \times_{\rho} N^{n-1}$ with warping function $\rho$. Given a hypersurface $f\co M^n\to\mathbb{Q}^{n+1}_c$, where $c \in \R$ and $n \geq 2$, its shape operator $A$ satisfies one of the following conditions:
    \begin{enumerate}[(i)]
        \item $\rank\ A (p) \geq n-1$,
        \item $\rank\ A (p) \leq 1$,
    \end{enumerate}
    for each point $p\in M^n$. 
\end{lemma}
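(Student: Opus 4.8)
The plan is to convert the warped-product hypothesis into an algebraic constraint on the shape operator through the Gauss equation, and then determine the admissible ranks pointwise. Fix $p\in M^n$ and work in an adapted orthonormal basis $\{e_0,e_1,\dots,e_{n-1}\}$ of $T_pM$, where $e_0=\partial_t$ is the unit vector tangent to the interval $I$ and $e_1,\dots,e_{n-1}$ span the tangent space $\mathcal{V}$ of the fiber. Since $N^{n-1}$ has constant curvature $\tilde c$ and the base is one-dimensional, the standard warped-product curvature identities show that the intrinsic curvature operator $\mathcal{R}$ on $\Lambda^2 T_pM$ is diagonal in the basis $\{e_i\wedge e_j\}$: it acts as the scalar $a:=-\rho''/\rho$ on $E_a:=e_0\wedge\mathcal{V}$ and as the scalar $b:=(\tilde c-(\rho')^2)/\rho^2$ on $E_b:=\Lambda^2\mathcal{V}$, all remaining components being zero.

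I would then apply the Gauss equation. Writing $R_A(X,Y)Z=\langle AY,Z\rangle AX-\langle AX,Z\rangle AY$, it reads $\mathcal{R}=c\,\id+\Lambda^2 A$ as operators on $\Lambda^2 T_pM$, where $(\Lambda^2 A)(X\wedge Y)=AX\wedge AY$. Comparing with the first step gives
\[
\Lambda^2 A=\mathcal{R}-c\,\id,
\]
so $\Lambda^2 A$ has at most the two eigenvalues $a-c$ and $b-c$, with eigenspaces $E_a$ (dimension $n-1$) and $E_b$ (dimension $\binom{n-1}{2}$).

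The remainder is linear algebra. Diagonalizing $A$ with eigenvalues $\lambda_0,\dots,\lambda_{n-1}$, the eigenvalues of $\Lambda^2 A$ are the products $\lambda_i\lambda_j$; if $r=\rank A(p)$ then $\rank(\Lambda^2 A)=\binom r2$ and $\ker(\Lambda^2 A)=(\mathrm{im}\,A\wedge\ker A)\oplus\Lambda^2(\ker A)$. Arguing by contradiction, suppose $2\le r\le n-2$. If $a=b$ then $\Lambda^2 A=(a-c)\,\id$, so all products $\lambda_i\lambda_j$ are equal, forcing $r\le 1$ when $a=c$ and $A$ a nonzero multiple of $\id$ (hence $r=n$) when $a\ne c$; both are excluded. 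If $a\ne b$, the existence of at least two vanishing $\lambda_i$ forces $0\in\{a-c,b-c\}$. In the case $a=c$ we get $\ker(\Lambda^2 A)=E_a$, and $\binom n2-\binom r2=n-1$ gives $r=n-1$, again excluded.

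The main obstacle is the surviving case $b=c$, since the dimension equation $\binom n2-\binom r2=\binom{n-1}{2}$ has solutions $r$ inside the forbidden range (for instance $r=4$, $n=7$), so a dimension count alone does not suffice. Here I would exploit the precise shape of the kernel: it must equal $E_b=\Lambda^2\mathcal{V}$, whose support---the smallest subspace $S$ with $E_b\subseteq\Lambda^2 S$---is the hyperplane $\mathcal{V}$. However, whenever $0<r<n$ both $\mathrm{im}\,A$ and $\ker A$ are nonzero, so the decomposable bivectors in $\mathrm{im}\,A\wedge\ker A$ already involve vectors spanning $\mathrm{im}\,A+\ker A=T_pM$; thus the support of $(\mathrm{im}\,A\wedge\ker A)\oplus\Lambda^2(\ker A)$ is all of $T_pM$, not a hyperplane. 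This contradiction eliminates the case $b=c$ in the forbidden range and establishes that $\rank A(p)\le 1$ or $\rank A(p)\ge n-1$.
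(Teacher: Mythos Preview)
Your argument is correct, and it takes a genuinely different route from the paper's. The paper proceeds by a case analysis on how the unit base vector $\xi=\partial_t$ interacts with $A$: it treats separately the situations $A\xi=0$, $A\xi=\mu\xi$, and $A\xi=\mu\xi+V_0$ with $0\neq V_0\in T_pN$, and in each case feeds the warped-product curvature identities \emph{componentwise} into the Gauss equation to force the rank to be $\le 1$ or $\ge n-1$. Your approach instead repackages the Gauss equation as the single operator identity $\Lambda^2 A=\mathcal{R}-c\,\id$ on $\Lambda^2 T_pM$, observes that the right-hand side has at most two eigenvalues supported on $e_0\wedge\mathcal V$ and $\Lambda^2\mathcal V$, and finishes with pure linear algebra; the ``support'' argument you use for the delicate case $b=c$ (where the dimension count alone genuinely fails, e.g.\ $(n,r)=(7,4)$) is a clean way to rule out the intermediate ranks. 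What the paper's method buys is that it stays elementary and never leaves the level of vectors and the tensorial Gauss equation; what yours buys is a basis-free, structural argument that makes the dichotomy transparent and would adapt readily to related situations (say, more factors in the warped product or other constraints on the curvature operator).
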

\begin{proof}
It is known that the curvature tensor of a warped product Riemannian manifold satisfy
    \begin{equation}\label{eq:warpedCurvature}
        \begin{split}    
        R^M(X,\xi)\xi &= - \frac{\text{Hess}_{\rho}(\xi,\xi)}{\rho} X,\\
        R^M(X,Y)\xi &= 0, \\
        R^M(X,Y)Z &= R^N(X,Y)Z - \frac{\Vert \nabla \rho \Vert^2}{\rho^2}( \langle Y,Z \rangle X - \langle X,Z \rangle Y),
        \end{split}
    \end{equation}
for all $X,Y,Z \in TN$, where $R^M$ and $R^N$ denote the curvature tensor of $M^m$ and $N^{m-1}$, respectively, and $\xi$ is a vector field tangent to the factor $I$. We will divide the proof in two cases.    
    First, suppose $A\xi = 0$ at a point $p$. This implies that $AX \in T_pN$ and $\rank\ A = \rank\ A|_{T_pN}$, where $ A|_{T_pN}$ is the restriction of $A$ to $T_pN$. From the Gauss equation, we have \[R^M(X,Y)Z = c \langle Y, Z \rangle X - \langle X, Z \rangle Y + \langle AY, Z \rangle AX - \langle AX, Z \rangle AY.\] Using \eqref{eq:warpedCurvature} and the fact that $N$ has constant sectional curvature $\tilde{c}$, it follows that
\[
\langle AY, Z \rangle AX - \langle AX, Z \rangle AY = 
\left(\tilde{c} - \frac{\Vert \nabla \rho \Vert^2}{\rho^2} \right) (\langle Y, Z \rangle X - \langle X, Z \rangle Y).
\] 
Thus, at the point $p$, we have $\rank\ A = n-1$ or $\rank\ A \leq 1$ depending on whether the expression $(\tilde{c} - \Vert \nabla \rho \Vert^2/\rho^2)$ is different to zero or equal to zero, respectively. Now, suppose that $A\xi \neq 0$. If $A\xi = \mu \xi$ at a point $p$, then by \eqref{eq:warpedCurvature} and the Gauss equation, we have
        \begin{equation*}
            \begin{split}
                - \frac{\text{Hess}_{\rho}(\xi,\xi)}{\rho} X &= R^M(X,\xi)\xi\\
                &= c(\langle \xi, \xi \rangle X - \langle X, \xi \rangle \xi) + \langle A\xi, \xi \rangle AX - \langle AX, \xi \rangle A\xi\\
                &= c\Vert \xi \Vert^2 X - \mu AX,
            \end{split}
        \end{equation*}
    for all $X \in T_pN$. This is equivalent to
\[
\mu AX = \left(c\Vert \xi \Vert^2 + \frac{\text{Hess}_{\rho}(\xi,\xi)}{\rho}\right)X.
\] 
Thus, $\rank\ A|_{T_pN} =n-1$ or $\rank\ A|_{T_pN} = 0$, i.e., $\rank\ A = n$ or $\rank\ A = 1$. Consider now the case $A\xi = \mu \xi + V_0$, with $0\neq V_0\in T_pN$ and $\mu \in \R$. Given $W \in \text{span}\{\xi,V_0\}^\perp$, using \eqref{eq:warpedCurvature} and the Gauss equation, we get 
\[
R^M (W,V_0)\xi = -\langle AW, \xi \rangle A V_0 + \langle A V_0, \xi \rangle A W,
\]
which implies 
\[
0 = \langle A\xi, W \rangle A V_0 = \langle V_0, V_0 \rangle AW.
\] 
Hence, span$\{V_0,\xi\}^{\perp} \subset \ker A$. Therefore, $\rank\ A \leq 2$. We will show now that $A V_0$ and $A\xi$ are linearly dependent. Since $n\geq 3$, taking $W \in \ker A \cap TN$, the warped metric and the Gauss equation give us
\[
-\frac{\text{Hess}_{\rho}(\xi,\xi)}{\rho} W = R^M(W,\xi)\xi = c\langle\xi,\xi\rangle W,
\] 
which implies
\[
-\frac{\text{Hess}_{\rho}(\xi,\xi)}{\rho} = c\Vert \xi \Vert^2
\]
at the point $p$. It follows from the expression of the curvature tensor $R^M$, applied to the vectors $V_0$ and $\xi$, that 
\[
\langle A\xi, \xi \rangle AV_0 - \langle AV_0, \xi \rangle A\xi =0.
\]
This implies that $\rank\ A =1$. This concludes the proof of the lemma.
\end{proof}

Finally, we present here the characterization of the objects studied in this paper when the principal orbits are umbilical and \(G\) is compact.

\begin{theorem}\label{thm:PrincipalsUmb}
    Let $f\co M^n \rightarrow \mathbb{H}^{n+1}$ be a complete hypersurface of $G$-cohomogeneity one whose the principal orbits are umbilical in $M^n$. Assume either that $n\geq 3$ and $M^n$ is compact or that $n\geq 4$ and the connected components of the set $\{x \in M^n: \nu(x)\geq n -1\}$ are bounded. Then f is a rotational submanifold.
\end{theorem}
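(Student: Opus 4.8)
The plan is to reduce everything to the geometry of a single principal orbit by exploiting the warped product structure that umbilicity forces. First I would fix a unit-speed geodesic $\gamma$ meeting every principal orbit orthogonally, parametrize the regular part of the orbit space by $t$ in an interval $I$, and let $(P,h)$ be a fixed principal orbit carrying a $G$-invariant metric. Since the orbits are the equidistant level sets of the distance to a fixed orbit, the unit field $\partial_t=\gamma'$ is $G$-invariant and the shape operator of each orbit $P_t$ in $M^n$ with respect to $\partial_t$ is $G$-invariant; umbilicity then reads $S_t=\lambda(t)\,\id$ with $\lambda$ a function of $t$ alone. Integrating $\dot g_t=-2\lambda(t)g_t$ shows that the metric on the regular part is $dt^2+\rho(t)^2h$, so that $M_{\mathrm{reg}}\cong I\times_\rho P$ as a warped product and the umbilicity hypothesis is identified with this warped structure.

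Next I would analyze the shape operator $A=A^f_\eta$ of $f$ ($\eta$ a unit normal) along the orbits. Using the Codazzi equation of $f$ in $\mathbb{H}^{n+1}$ together with the $G$-invariance of the intrinsic curvature, I would first show that $\partial_t$ is a principal direction, $A\partial_t=\mu(t)\partial_t$, so that $TP$ is $A$-invariant and the restriction $A|_{TP}$ is $G$-equivariant; consequently its eigenvalues $k_1,\dots,k_{n-1}$ (the principal curvatures along the orbit) depend only on $t$. Feeding the warped connection $\nabla_X\partial_t=(\rho'/\rho)X$ into Codazzi yields the evolution $\frac{d}{dt}(\rho\,k_i)=\rho'\mu$, identical for every $i$, so that each difference $\rho(k_i-k_j)$ is constant in $t$. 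On the other hand, combining the warped-product Gauss equation \eqref{eq:warpedCurvature} with the Gauss equation of $f$ shows that the sectional curvatures $K^h(e_i,e_j)$ of the fixed metric $h$ satisfy $K^h(e_i,e_j)=(\rho')^2-\rho^2+\rho^2k_ik_j$, an expression that must be independent of $t$.

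Differentiating this last relation in $t$ and using the common evolution of the $\rho k_i$, I would subtract the identities for two pairs of indices to obtain $\rho'\mu\,(k_j-k_k)=0$. When $n\geq 4$ the orbit has at least three independent directions, so wherever $\rho'\mu\neq 0$ this forces all $k_i$ equal, i.e. $A|_{TP}=\kappa(t)\,\id$ is umbilical; for $n=3$ the orbit $P^2$ is a homogeneous surface and is automatically of constant curvature. In either case the orbits are extrinsically spherical of codimension two in $\mathbb{H}^{n+1}$ and $P$ has constant sectional curvature $\tilde c$, which also legitimizes applying Lemma \ref{lem:WarpedNull}: at each point $\rank A\geq n-1$ or $\rank A\leq 1$, the latter locus being precisely $\{\nu\geq n-1\}$. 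Here the two hypotheses enter: compactness of $M^n$, respectively boundedness of the connected components of $\{\nu\geq n-1\}$, guarantees that the degenerate set $\{\rho'\mu=0\}$ (where $\partial_t\in\ker A$ or the orbits are totally geodesic in $M^n$) is capped off by singular orbits and does not run off to infinity, so the round-orbit structure persists and closes up globally.

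With $\partial_t$ a principal direction and the orbits extrinsically spherical, the mixed second fundamental form vanishes, $\alpha_f(\partial_t,X)=0$ for $X\in TP$, which is exactly the condition for $f$ to split along the warped decomposition. I would then invoke N{\"o}lker's warped product representations and the decomposition criterion from \cite{spaceFormsNoronhadeCastro} to conclude that $f=\Psi\circ(f_0\times f_1)$, where $f_0\co I\to V^2$ is a profile curve into a totally geodesic $\mathbb{H}^2\subset\mathbb{H}^{n+1}$ and $f_1\co P\to\mathbb{Q}^{n-1}_{\tilde c}$ is the totally umbilical inclusion of the round orbit; thus $l_1=0$ and $f$ is rotational. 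I expect the main obstacle to be the content of the second and third paragraphs, namely upgrading the intrinsic umbilicity of the orbits to extrinsic sphericity in $\mathbb{H}^{n+1}$: umbilicity in $M^n$ only produces the warped structure, and one must combine the Codazzi evolution, the Gauss equation, the homogeneity of the orbit, and the dimension count $n\geq 4$ (or the surface case $n=3$) to force all principal curvatures along the orbit to coincide, while simultaneously using the global hypotheses to control the degeneracy locus $\{\rho'\mu=0\}$.
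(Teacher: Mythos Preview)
The paper does not prove Theorem~\ref{thm:PrincipalsUmb} from scratch: for $n\geq 4$ it cites \cite[Theorem 6.1]{AspertiCaputiCohomogeneityHyperbolic}, and for the compact case $n\geq 3$ it invokes the rigidity homomorphism of Theorem~\ref{thm:ion} following \cite{MoutinhoTojeiroUnico}. Your outline attempts a direct argument, and there is a genuine gap in the second paragraph. You assert that $\partial_t$ is a principal direction of $A=A^f_\eta$ and that $A|_{TP}$ is $G$-equivariant, so that its eigenvalues $k_i$ depend only on $t$; but neither follows from ``Codazzi together with $G$-invariance of the intrinsic curvature''. For $g\in G$ one only has $A^{f\circ g}_{\eta\circ g}=g_*^{-1}A^f_\eta\, g_*$, and identifying this with $A^f$ is precisely the statement that $f\circ g$ is congruent to $f$ --- i.e.\ the content of Theorem~\ref{thm:ion}, which is available in the compact case (no complete nullity leaves) but not in general. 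Without it the eigenvalues of $A|_{TP}$ need not be constant along an orbit, so your evolution law $\tfrac{d}{dt}(\rho k_i)=\rho'\mu$ and the key identity $\rho'\mu(k_j-k_k)=0$ are unfounded.

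A cleaner route bypasses $G$-equivariance altogether. In the proof of Lemma~\ref{lem:WarpedNull}, the case $A\xi=\mu\xi+V_0$ with $0\neq V_0\in TN$ uses only the warped identities \eqref{eq:warpedCurvature} and the Gauss equation --- not the constant curvature of the fiber --- to force $\rank A=1$; hence on $\{\nu\leq n-2\}$ one already has $A\partial_t=\mu\,\partial_t$. Then the warped identity $K^M(X,\partial_t)=-\rho''/\rho$ together with Gauss gives $\mu\,\langle AX,X\rangle=1-\rho''/\rho$ for every unit $X\in TP$, so wherever $\mu\neq 0$ the restriction $A|_{TP}$ is a scalar multiple of the identity \emph{pointwise}: extrinsic umbilicity of the orbits falls out immediately, with no appeal to $G$ and no comparison of distinct $k_i$. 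Your final paragraph then applies, and the global hypotheses are needed only to control the residual set $\{\mu=0\}\cup\{\nu\geq n-1\}$.
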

\noindent The case where \( M^n \) is complete and \( n \geq 4 \) follows from \cite[Theorem 6.1]{AspertiCaputiCohomogeneityHyperbolic}. Note that they allowed \( G \) to be non-compact, which can lead to non-rotational examples. However, in this case, the orbits are necessarily non-compact. The case where the manifold is compact and \( n \geq 3 \) follows from the same arguments as in \cite[Corollary 4]{MoutinhoTojeiroUnico} (see also \cite[Proposition 2.2]{MercuriPodestaSeixasTojeiroCohomogeneityEuclidean}). These arguments involve a group homomorphism that naturally arises in this setting. Due to its inherent interest, we present the statement of this homomorphism for the case when the ambient space is also hyperbolic. The proof is the same as in the Euclidean case but uses Theorem \ref{thm:Sacksteder}.
\begin{theorem} \label{thm:ion}
    Let \( f\co M^n\to\Q^{n+1}_c \) be a complete hypersurface with \( n \geq 3 \) and \( c \leq 0 \). If there exists no complete leaf of relative nullity of dimension \( n-1 \) or \( n-2 \), then the identity component \( \Iso^0(M^n) \) of the isometry group of \( M^n \) admits an orthogonal representation \( \rho\co\Iso^0(M^n)\to\Iso^0(\Q^{n+1}_c) \) such that \( f\circ g=\rho(g)\circ f \) for all \( g\in\Iso^0(M^n) \).
\end{theorem}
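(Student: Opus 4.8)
The plan is to construct $\rho(g)$ by applying the rigidity statement of Theorem \ref{thm:Sacksteder} to the competing immersion $\tilde f:=f\circ g$, and then to verify that the assignment $g\mapsto\rho(g)$ is a well-defined continuous homomorphism taking values in the identity component. First, fix $g\in\Iso^0(M^n)$ and note that $f\circ g\co M^n\to\Q^{n+1}_c$ is again an isometric immersion of the same complete manifold, since $g$ is an intrinsic isometry. The hypothesis that $f$ has no complete leaf of relative nullity of dimension $n-1$ or $n-2$ is exactly what is needed to invoke Theorem \ref{thm:Sacksteder} with $\tilde f=f\circ g$. It yields $B^{f\circ g}=B^f$ and, on each connected component $C$ of $M^n\setminus B^f$, an equality $A^{f\circ g}=\varepsilon_C\,A^f$ with $\varepsilon_C\in\{\pm1\}$.

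Next I would remove the sign ambiguity using that $\Iso^0(M^n)$ is path-connected: choose a continuous path $t\mapsto g_t$ in $\Iso^0(M^n)$ with $g_0=\mathrm{id}$ and $g_1=g$. On each fixed component $C$ the shape operators $A^{f\circ g_t}$ vary continuously in $t$, so the sign $\varepsilon_C=\varepsilon_C(t)$ is a continuous $\{\pm1\}$-valued function, hence constant, and it equals $+1$ at $t=0$. Therefore $A^{f\circ g}=A^f$ holds on all of $M^n\setminus B^f$, with the uniform sign $+1$. Since $f$ and $f\circ g$ also share the induced metric and both are totally geodesic along $B^f$, the fundamental theorem of submanifolds, combined with the connectedness of $M^n$, upgrades this pointwise agreement of first and second fundamental forms to a global congruence: there exists $\rho(g)\in\Iso(\Q^{n+1}_c)$ with $f\circ g=\rho(g)\circ f$. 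Uniqueness of $\rho(g)$ follows because any ambient isometry fixing $f(M^n)$ pointwise preserves each normal line, and at a non--totally-geodesic point it cannot reverse the normal, since by naturality of the second fundamental form this would force $\alpha_f=-\alpha_f$ there; hence its differential is the identity at such a point, so the isometry is the identity.

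With $\rho$ well defined, the homomorphism property is formal: for $g,h\in\Iso^0(M^n)$ one computes $f\circ(gh)=(f\circ g)\circ h=(\rho(g)\circ f)\circ h=\rho(g)\circ(f\circ h)=\rho(g)\circ\rho(h)\circ f$, and uniqueness forces $\rho(gh)=\rho(g)\rho(h)$; moreover $\rho(\mathrm{id})=\mathrm{id}$. Continuity of $\rho$ follows from the continuous dependence of the congruence on the first-order data of $f\circ g$ at a fixed base point together with uniqueness, and the path $t\mapsto\rho(g_t)$ then joins $\rho(g)$ to $\rho(\mathrm{id})=\mathrm{id}$, so $\rho$ takes values in $\Iso^0(\Q^{n+1}_c)$.

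I expect the main obstacle to be the globalization step across $B^f$: once the sign has been pinned to $+1$ on every component, one must check that the identification of the (line) normal bundles extends consistently through the totally geodesic locus, so that the locally defined congruences patch into a single ambient isometry rather than into isometries differing by a reflection across components. This is precisely where Theorem \ref{thm:Sacksteder} replaces Sacksteder's Euclidean statement used in the analogous Euclidean argument of \cite{MoutinhoTojeiroUnico,MercuriPodestaSeixasTojeiroCohomogeneityEuclidean}. A minor separate point is the degenerate case in which $f$ is totally geodesic, where the uniqueness of $\rho(g)$ fails by the above reasoning and must be handled directly.
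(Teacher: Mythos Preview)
Your proposal is correct and follows essentially the same approach that the paper intends: the paper does not spell out a proof but simply states that ``the proof is the same as in the Euclidean case but uses Theorem \ref{thm:Sacksteder},'' referring to \cite{MoutinhoTojeiroUnico,MercuriPodestaSeixasTojeiroCohomogeneityEuclidean}. Your argument---apply Theorem \ref{thm:Sacksteder} to $f$ and $f\circ g$, eliminate the sign ambiguity via a path in $\Iso^0(M^n)$, invoke the fundamental theorem of hypersurfaces, and then verify the homomorphism and continuity properties---is exactly that standard argument, and your acknowledged caveat about the totally geodesic case is the only edge case requiring separate (and easy) treatment; the ``globalization across $B^f$'' concern you raise is in fact already resolved by your path argument, since once a continuous family of global unit normals $\tilde\eta_t$ is fixed the sign $+1$ holds on every component simultaneously and the shape operators agree on all of $M^n$.
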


\section{The proofs} \label{sec:Proofs}

\PfThm{\ref{thm:Rothyper}}
The beginning of the proof closely follows the proof of \cite[Proposition 2.3]{MercuriPodestaSeixasTojeiroCohomogeneityEuclidean}, establishing the objects to be used (such a structure was also provided in \cite[Proposition 3.1]{AspertiCaputiCohomogeneityHyperbolic}). Since $M^n$ carries a complete leaf of relative nullity $\mathcal{F}$, it can not be compact. Thus the orbit space $\Omega = M^n/ G$ is homeomorphic to either $\R$ or $[0,\infty)$. Moreover, if $\pi: M^n \rightarrow \Omega$ denotes the canonical projection and $\gamma: \R \rightarrow M^n$ is a normal geodesic parameterized by arc-length, then $\pi\circ \gamma$ maps $\R$ homeomorphically onto $\Omega$ in the first case, and it is a covering map of $\R\setminus\{0\}$ onto the subset $\Omega^0$ of internal points of $\Omega$ in the latter. Set $I = \gamma^{-1}(G(\mathcal{F}))$. Since $G(\mathcal{F})$ is a closed unbounded connected subset, and using that $G(\mathcal{F}) = G(\gamma(I))$, it follows easily that if $I \neq \R$ then $I=[a,\infty)$ for some $a \in \R$ in the first case and $I = (-\infty, b]\cup [a,\infty)$ for some $a,b >0$ in the latter. Note that the rank of the its shape operator is everywhere equal to two on $G(\mathcal{F})$, because the relative nullity subspace coincides with the nullity of the curvature tensor at a point where the rank is at least two, whence the subset where the type number is two is invariant under isometries. Let $p=\gamma(t_0)$ and consider $\epsilon>0$ such that $(t_0-\epsilon,t_0 +\epsilon)\subset I$ and such that the map
\[
\Phi: (t_0\epsilon, t_0 +\epsilon) \times \Sigma_p \rightarrow \pi^{-1}(t_0\epsilon, t_0 +\epsilon)
\]
given by
\[\Phi(t,g(p)) = g(\gamma(t)),
\]
is a $G$-equivariant diffeomorphism. We call $\Gamma = \pi^{-1}(t_0 - \epsilon, t_0 +\epsilon)$ a tube around $\Sigma_p$. We have a well-defined vector field $\xi$ on $\Gamma$ given by $\xi(y) = g_*(\gamma(t))\gamma'(t)$ for $y= g(\gamma(t))$, $t\in (t_0 \epsilon, t_0 +\epsilon)$, and $\xi(y)$ is orthogonal to $\Sigma_{\gamma(t)}$ at $y$.

Now let \(\eta\) be a local unit normal vector field to \(f\) on \(\Gamma\) and \(A^f_\eta\) the shape operator of \(f\) with respect to \(\eta\). Given a principal orbit \(\Sigma_q = G(q) \subset \Gamma\) of \(G\), the vector fields \(\tilde{\xi} = f_*({\xi |}_{\Sigma_q})\) and \(\tilde{\eta} = \eta|_{\Sigma_q}\) determine an orthonormal normal frame of the restriction \(\tilde{f}\co \Sigma_q \to \H^{n+1}\) of \(f\) to \(\Sigma_q\). Denote by \(A_{\tilde{\eta}}\) and \(A_{\tilde{\xi}}\) the corresponding shape operators (here we omit the superscript for clarity as there is no ambiguity). By construction we have $A_{\tilde{\xi}} \circ g_* = g_* \circ A_{\tilde{\xi}}$ for any $g \in G$, hence the eigenvalues of $A_{\tilde{\xi}}$ are constant. On the other hand, $A_{\tilde{\eta}}= \Pi \circ A^f_{\eta}$, where $\Pi$ is the orthogonal projection of $T M^n$ onto $T \Sigma_q$. In particular, $\text{rank} A_{\tilde{\eta}} \leq \text{rank} A^f_{\eta}$, so we have $\text{rank} A_{\tilde{\eta}} \leq 2$ on $\Sigma_q$. It follows from Gauss equation that we have the following two cases to consider

\begin{enumerate}
    \item\label{thmItem:Rank1} rank $A_{\Tilde{\eta}}\leq 1$ on each principal orbit contained in $\Gamma$;
    \item\label{thmItem:Rank2} rank $A_{\Tilde{\eta}} = 2$ on some principal orbit contained in $\Gamma$.
\end{enumerate}

First, we show that \eqref{thmItem:Rank1} does not occur. Assume the opposite. Since $\Sigma^{n-1}_p$ is a compact submanifold of $\H^{n+1}$, there is a point $x \in \Sigma^{n-1}_p$ where $\nu^{\tilde{f}}(x)=0$ (the index of relative nullity of \(\tilde{f}\)). From \cite[Theorem~6]{spaceFormsNoronhadeCastro}, the universal cover $\Tilde{\Sigma}^{n-1}_p$ of $\Sigma^{n-1}_p$ is isometrically immersed in $\H^n$ as a compact isoparametric hypersurface. This means $\Tilde{\Sigma}^{n-1}_p$ is isometric to $\S^{n-1}_c$ for some \(c>0\). Additionally, the shape operator \(A_{\tilde{\xi}}\) of $\Sigma^{n-1}_p$ will be a constant multiple of the identity. In particular, the principal orbits in $\Gamma$ are umbilical in $M^n$. Given that the integral curve of \(\xi\) is the normal geodesic, we derive from the main result of \cite{HiepnkoWarpedDeRham} that there is an open subset $U\subset M^n$, $U = I \times_{\rho} N^{n-1}_c$, where \(N\) is a Riemannian manifold with constant curvature \(c>0\), and $f(U) \subset \Gamma$ (refer also to Proposition 3.2 in \cite{AspertiCaputiCohomogeneityHyperbolic}). Considering $n \geq 5$ and $\nu^f|_{\Gamma} = n-2$, this leads to a contradiction with Lemma \ref{lem:WarpedNull}.

If \(\rank\ A_{\tilde{\eta}} = 2\) along a principal orbit \(\Sigma^{n-1}_p \subset \Gamma\), then the same rank condition holds on a smaller tube around \(\Sigma^{n-1}_p\) within \(\Gamma\), which we continue to denote by \(\Gamma\). Our goal now is to obtain a decomposition of the tangent space of \(M^n\) compatible with a warped structure as stated in the theorem, in order to use the main result in \cite{NolkerWarped}. To achieve this, we need to show that \(\nu^f\) is constant throughout \(M^n\) and that the eigenspaces of \(A_{\tilde{\xi}}\) are autoparallel distributions in \(\Sigma^{n-1}_p\). This will follow from the series of assertions proved below. In the proof, \(\nabla\) will denote the connection on \(\Sigma^{n-1}_p\) and \({}^{\scriptscriptstyle M}\nabla\) will denote the connection on \(M^n\).

\begin{claim}
The vector field \(\tilde{\eta}\) is parallel along the distribution \(\ker A_{\tilde{\eta}}\) with respect to the normal connection of \(\tilde{f}\), represented here simply by \(\nabla^{\perp}\).
\end{claim}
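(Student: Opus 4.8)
The plan is to turn the claim into a single scalar identity and then settle it with a short rank computation. Since $\{\tilde\xi,\tilde\eta\}$ is an orthonormal normal frame of $\tilde f$, differentiating $\langle\tilde\eta,\tilde\eta\rangle=1$ shows that $\nabla^{\perp}_X\tilde\eta$ has no $\tilde\eta$-component, so $\nabla^{\perp}_X\tilde\eta=\tau(X)\,\tilde\xi$ with $\tau(X):=\langle\nabla^{\perp}_X\tilde\eta,\tilde\xi\rangle$. Hence the claim is equivalent to showing that $\tau(T)=0$ for every $T\in\ker A_{\tilde\eta}$, and this is the statement I would aim at.

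First I would rewrite $\tau$ in terms of the hypersurface $f$. As $\tilde\xi$ is normal to $\tilde f$, only the normal part of the ambient derivative of $\tilde\eta$ contributes, so $\tau(X)=\langle\bar\nabla_X\tilde\eta,\tilde\xi\rangle$, where $\bar\nabla$ denotes the connection of $\H^{n+1}$. Since $\tilde\eta=\eta|_{\Sigma^{n-1}_p}$ is the unit normal of the hypersurface $f$, the Weingarten formula gives $\bar\nabla_X\tilde\eta=-f_*A^f_\eta X$, and pairing with $\tilde\xi=f_*\xi$ yields
\[
\tau(X)=-\langle A^f_\eta X,\xi\rangle=-\langle X,A^f_\eta\xi\rangle,\qquad X\in T\Sigma^{n-1}_p.
\]
Thus $\tau$ vanishes on $\ker A_{\tilde\eta}$ if and only if the tangential component of $A^f_\eta\xi$ lies in $(\ker A_{\tilde\eta})^{\perp}=\operatorname{im}A_{\tilde\eta}$, the last equality because $A_{\tilde\eta}$ is symmetric on $T\Sigma^{n-1}_p$.

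To establish this inclusion I would exploit that $A_{\tilde\eta}=\Pi\circ A^f_\eta|_{T\Sigma^{n-1}_p}$ is the compression of $A^f_\eta$ to the hyperplane $T\Sigma^{n-1}_p\subset T_pM$, and that both operators have rank two: $\rank A^f_\eta=2$ because $\nu^f=n-2$ on $\Gamma$, while $\rank A_{\tilde\eta}=2$ by the hypothesis of the present case. Writing $A^f_\eta$ in block form relative to the orthogonal splitting $T_pM=T\Sigma^{n-1}_p\oplus\R\xi$, I would argue by contradiction: if the tangential part of $A^f_\eta\xi$ had a nonzero component $w$ in $\ker A_{\tilde\eta}$, then $A^f_\eta w$ would be a nonzero multiple of $\xi$, whence $\xi\in\operatorname{im}A^f_\eta$; but the projections to $T\Sigma^{n-1}_p$ of $A^f_\eta(\operatorname{im}A_{\tilde\eta})$ already fill the two-dimensional space $\operatorname{im}A_{\tilde\eta}$, so adjoining $\xi$ would force $\rank A^f_\eta\geq 3$, a contradiction. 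Hence the tangential component of $A^f_\eta\xi$ lies in $\operatorname{im}A_{\tilde\eta}$, and the claim follows. The only genuinely non-formal step is this rank argument, which is the linear-algebra fact that a symmetric operator whose compression to a hyperplane keeps the same rank must send the relevant off-diagonal vector into the image of that compression; everything else is routine bookkeeping with the Weingarten formula and the orthonormal normal frame.
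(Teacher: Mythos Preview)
Your argument is correct and is genuinely different from the paper's proof. The paper splits into two cases according to whether $\Sigma^{n-1}_p$ is extrinsically homogeneous (i.e., whether $A_{\tilde\eta}$ is $G$-equivariant). In the homogeneous case it invokes the classification of compact full homogeneous codimension-two submanifolds of $\H^{n+1}$ (they are isoparametric hypersurfaces of an umbilical sphere) to obtain a relation $\lambda I = aA_{\tilde\eta}+bA_{\tilde\xi}$, and then a Codazzi computation gives $\nabla^\perp_X\tilde\eta=0$ on $\ker A_{\tilde\eta}$. In the non-homogeneous case it quotes an argument of Dajczer--Gromoll. By contrast, you bypass the $G$-action entirely: the identity $\tau(X)=-\langle X,A^f_\eta\xi\rangle$ and the purely linear-algebraic fact that a symmetric rank-two operator whose compression to a hyperplane still has rank two must send the off-diagonal direction into the image of that compression settle the claim at once. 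Your route is more elementary, uses no external classification results, and makes transparent that the claim is really just the rank equality $\rank A^f_\eta=\rank A_{\tilde\eta}=2$; the paper's route, on the other hand, produces as a byproduct the extra information that $A_{\tilde\xi}|_{\ker A_{\tilde\eta}}$ is a nonzero multiple of the identity in the extrinsically homogeneous case, though this is rederived later anyway. One small suggestion: in the contradiction step you assert that ``$A^f_\eta w$ would be a nonzero multiple of $\xi$''; it is worth spelling out that $w\in\ker A_{\tilde\eta}$ forces $A^f_\eta w\in\operatorname{span}\{\xi\}$, and that $\langle A^f_\eta w,\xi\rangle=\langle w,\Pi A^f_\eta\xi\rangle=\|w\|^2\neq 0$ because $w$ is precisely the $\ker A_{\tilde\eta}$-component of $\Pi A^f_\eta\xi$.
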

We know that \(A_{\tilde{\xi}}\) is \(G\)-equivariant. If \(A_{\tilde{\eta}}\) is also \(G\)-equivariant, then \(\Sigma^{n-1}_p\) is an extrinsically homogeneous submanifold, and we will consider this first case. If \(\Sigma^{n-1}_p\) admits a reduction in codimension, meaning it is not full, it is a hypersurface of a totally geodesic \(\H^{n}\) in \(\H^{n+1}\), leading to a contradiction with the value of the index of relative nullity by the same arguments as in case \eqref{thmItem:Rank1}. Therefore, \(\Sigma^{n-1}_p\) is a compact submanifold of codimension two, which must be full. According to the theory of isoparametric submanifolds (cf. \cite{BerndtConsoleOlmosBook}), it has to be a homogeneous isoparametric hypersurface of a sphere \(\S^n_{c}\), for some \(c>0\), which is umbilically embedded in \(\mathbb{H}^{n+1}\).
In particular, at every point of \(\Sigma^{n-1}_p\), there exist \(\lambda \neq 0, a, b \in \mathbb{R}\) such that
\[ 
\lambda I = a A_{\tilde{\eta}} + b A_{\tilde{\xi}},
\]
where \(I\) is the matrix identity, and since the rank of \(A_{\tilde{\eta}}\) is 2 and \(n \geq 5\), it follows that \(b \neq 0\) and, consequently, \({A_{\tilde{\xi}}}|_{\ker A_{\tilde{\eta}}}\) is a non-zero multiple of the identity. Now consider the Codazzi equation
\[
A_{\nabla^{\perp}_Y \tilde{\eta}} X + A_{\tilde{\eta}}(\nabla_Y X) = A_{\nabla^{\perp}_X \tilde{\eta}} Y + A_{\tilde{\eta}}(\nabla_X Y),
\]
for vectors \(Y, X \in \ker A_{\tilde{\eta}}\). Since \(n \geq 5\), for each \(X\), we can find \(Y\) orthogonal to \(X\). Taking the inner product of the previous equation with \(Y\), we obtain the following relation:
\[
\langle \nabla^{\perp}_X \tilde{\eta}, \tilde{\xi} \rangle \langle A_{\tilde{\xi}} Y, Y \rangle = 0,
\]
which implies the parallelism of \(\tilde{\eta}\) along \(\ker A_{\tilde{\eta}}\) in the case where \(\Sigma^{n-1}_p\) is extrinsically homogeneous. In the case where it is not extrinsically homogeneous, there exists \(g \in G\) such that \(A_{\tilde{\eta}} g_* \neq \pm g_* A_{\tilde{\eta}}\) everywhere. Using the same argument as in \cite[Lemma 6]{DajczerGromollIsoCod2}, it follows that \(\tilde{\eta}\) is parallel along \(\ker A_{\tilde{\eta}}\).
\EPfC

Let \(h\co N_p^{n-3} \rightarrow \H^{n+1}\) be the restriction of \(\tilde{f}\) to the leaf \(N_p^{n-3}\) of \(\ker A_{\tilde{\eta}}\) passing through the point \(p\). It is not difficult to see that \(\ker A_{\tilde{\eta}}\) is \(G\)-invariant and the leaves of the distribution are homogeneous submanifolds, so we will omit the point in the notation. Therefore, if one leaf of the distribution \(\ker A_{\tilde{\eta}}\) is autoparallel, then all are.

\begin{claim}
    \(\ker A_{\tilde{\eta}}\) is an autoparallel distribution in \(\Sigma^{n-1}_p\).
\end{claim}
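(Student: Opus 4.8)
The plan is to prove the Claim by reducing it to a statement about the normal connection of $\tilde f$. Writing $\mathcal{D}=\ker A_{\tilde\eta}$, what has to be shown is $\nabla_X Y\in\mathcal{D}$ for $X,Y\in\mathcal{D}$; since $A_{\tilde\eta}$ is self-adjoint and invertible on $\mathcal{D}^\perp$, this is equivalent to $A_{\tilde\eta}(\nabla_X Y)=0$. First I would record the immediate consequence of the previous Claim: because $\{\tilde\eta,\tilde\xi\}$ is an orthonormal normal frame, $\nabla^\perp_X\tilde\eta=0$ for $X\in\mathcal{D}$ forces $\nabla^\perp_X\tilde\xi=0$ there as well. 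I then write the Codazzi equation of $\tilde f$ for $X\in\mathcal{D}$ and $Z\in\mathcal{D}^\perp$, pair it with $Y\in\mathcal{D}$, and use $\nabla^\perp_X\tilde\eta=0$ to collapse it to a relation of the form $\langle A_{\tilde\eta}Z,\nabla_X Y\rangle=-\langle\nabla^\perp_Z\tilde\eta,\tilde\xi\rangle\,\langle A_{\tilde\xi}X,Y\rangle$. Since $A_{\tilde\eta}$ carries $\mathcal{D}^\perp$ onto $\mathcal{D}^\perp$ and $A_{\tilde\xi}|_{\mathcal{D}}\neq0$, this identity shows that $\mathcal{D}$ is autoparallel in $\Sigma^{n-1}_p$ \emph{if and only if} $\langle\nabla^\perp_Z\tilde\eta,\tilde\xi\rangle=0$ for every $Z\in\mathcal{D}^\perp$, i.e. if and only if $\tilde\eta$ is a parallel normal field of $\tilde f$, and not merely parallel along $\mathcal{D}$.

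To establish this full parallelism I would split into the two cases already used for the previous Claim. In the extrinsically homogeneous case, $\Sigma^{n-1}_p$ is a homogeneous isoparametric hypersurface of a sphere $\S^n_c$ that is umbilically embedded in $\H^{n+1}$; hence $A_{\tilde\eta}$ is an affine combination of the identity and the (isoparametric) shape operator of $\Sigma^{n-1}_p$ in $\S^n_c$, and therefore $\mathcal{D}$ is one of the curvature distributions of that isoparametric hypersurface. For an isoparametric hypersurface each curvature distribution is autoparallel: the total symmetry of $\nabla A$ coming from Codazzi, together with constancy of the principal curvatures, gives $(\nabla_X A)Y=0$ for $X,Y$ in a fixed eigendistribution, whence $\nabla_X Y$ stays in it (cf. \cite{BerndtConsoleOlmosBook}). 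This settles the homogeneous case outright. In the non-homogeneous case there is $g\in G$ with $A_{\tilde\eta}g_*\neq\pm g_*A_{\tilde\eta}$ everywhere, and I would run the argument of \cite[Lemma 6]{DajczerGromollIsoCod2} exactly as for the previous Claim, which there yields the normal parallelism of $\tilde\eta$ in all tangent directions, hence $\nabla^\perp_Z\tilde\eta=0$ on $\mathcal{D}^\perp$ and the conclusion.

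The main obstacle is precisely this upgrade from "parallel along $\mathcal{D}$" to "parallel in every direction": the codimension-two Codazzi equation of the orbit alone leaves the term $\langle\nabla^\perp_Z\tilde\eta,\tilde\xi\rangle$ undetermined, and nothing intrinsic to $\Sigma^{n-1}_p$ kills it — one must import the isoparametric rigidity in one case and the Dajczer–Gromoll congruence obstruction in the other. As an independent check, and to align with the warped-product conclusion of the theorem, I would also verify the statement through the hypersurface $f$ itself: applying the Codazzi equation of $f$ to $X,Y\in\mathcal{D}$ shows $\mathcal{D}\subset\Delta$, the relative nullity of $f$, while from $\tilde\eta$ being the unit normal of $f$ one computes $\langle\nabla^\perp_Z\tilde\eta,\tilde\xi\rangle=-\langle A^f_\eta\xi,Z\rangle$, so the vanishing of the leaf's second fundamental form is equivalent to $\xi$ lying in $\Delta$ as well. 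Since $\nu^f\equiv n-2$ on $\Gamma$ makes $\Delta$ autoparallel in $M^n$, once $\Delta=\mathbb{R}\xi\oplus\mathcal{D}$ is known the autoparallelism of $\mathcal{D}$ in $\Sigma^{n-1}_p$ follows by projecting $\Delta$-geodesics onto $\Sigma^{n-1}_p$; this reformulation makes transparent that the geometric content of the Claim is exactly that the normal-geodesic direction is a relative-nullity direction of $f$.
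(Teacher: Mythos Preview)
Your reduction via Codazzi to the equivalence ``$\mathcal{D}$ autoparallel $\Longleftrightarrow$ $\tilde\eta$ is parallel in the normal connection along all of $T\Sigma_p$'' is correct and is exactly equation \eqref{eq:CodazziG} of the paper, and your treatment of the extrinsically homogeneous case is valid and in fact sharper than the paper's: identifying $\mathcal{D}=\ker A_{\tilde\eta}$ with a single curvature distribution of the isoparametric hypersurface and invoking the standard autoparallelism of such distributions settles that case directly.

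The gap is in the non-homogeneous case. You assert that the argument of \cite[Lemma~6]{DajczerGromollIsoCod2} yields $\nabla^\perp_Z\tilde\eta=0$ for \emph{every} $Z\in T\Sigma_p$, but that lemma, as used in the previous Claim, only produces parallelism of $\tilde\eta$ along $\ker A_{\tilde\eta}$; nothing in the comparison of $A_{\tilde\eta}$ with $g_*^{-1}A_{\tilde\eta}g_*$ controls $\nabla^\perp_Z\tilde\eta$ for $Z\in\mathcal{D}^\perp$. Since by your own equivalence ``full parallelism of $\tilde\eta$'' is \emph{equivalent} to the Claim, this step is precisely what must be proved and cannot be imported from the earlier argument. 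Your ``independent check'' has the same circularity: the identity $\langle\nabla^\perp_Z\tilde\eta,\tilde\xi\rangle=-\langle A^f_\eta\xi,Z\rangle$ is fine, but the inclusion $\mathcal{D}\subset\Delta^f$ does not follow from Codazzi of $f$ alone (for $X\in\mathcal{D}$ one only gets $A^f_\eta X\in\mathrm{span}\{\xi\}$), and $\xi\in\Delta^f$ is established in the paper \emph{after} the present Claim, using it as input.

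The paper avoids this circularity by a different route that does not pass through full normal parallelism. From \eqref{eq:CodazziG} it reads off that the first normal space of the leaf $h\colon N^{n-3}\to\H^{n+1}$ is one-dimensional (equation \eqref{eq:SecFundgGeneral}), applies codimension reduction \cite{sNullitiesReductionDajczerRodriguez} to realise $h$ as a compact homogeneous hypersurface of $\H^{n-2}$, hence an umbilical round sphere $\S^{n-3}_{c_2}$, and then invokes \cite[Lemma~17]{spaceFormsNoronhadeCastro}: if the leaves of $\ker A_{\tilde\eta}$ are not isometric to $\R^{n-3}$, the distribution must be autoparallel. Only afterwards, in the next Claim, does the paper feed autoparallelism back into \eqref{eq:CodazziG} to obtain $\langle\nabla^\perp_Z\tilde\eta,\tilde\xi\rangle=0$ and $\xi\in\Delta^f$. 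So the missing ingredient in your non-homogeneous case is exactly this leaf-geometry step (reduction of codimension plus the cited lemma), which replaces the unjustified upgrade of Dajczer--Gromoll.
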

Consider the Codazzi equation
\[
- A_{\nabla^{\perp}_Z \tilde{\eta}} X - A_{\tilde{\eta}}(\nabla_Z X) = \nabla_X A_{\tilde{\eta}} Z - A_{\nabla^{\perp}_X \tilde{\eta}} Z - A_{\tilde{\eta}}(\nabla_X Z),
\]
for \(X \in \ker A_{\tilde{\eta}}\) and \(Z \in \text{Im} A_{\tilde{\eta}}\). Taking the inner product with \(Y \in \ker A_{\tilde{\eta}}\) and using the parallelism of \(\tilde{\eta}\) along \(\ker A_{\tilde{\eta}}\), we obtain
\begin{equation}\label{eq:CodazziG}
    \langle \nabla^{\perp}_Z \tilde{\eta}, \tilde{\xi} \rangle \langle \alpha_{\tilde{f}}(X, Y), \tilde{\xi} \rangle = \langle \nabla_X Y, A_{\tilde{\eta}} Z \rangle.
\end{equation}
Define \(\omega\) as the one-form where \(\omega(X) = \langle \nabla^{\perp}_X \tilde{\xi}, \tilde{\eta} \rangle\) for \(X \in T\Sigma\). Consider two local nonzero vector fields \(Z_1, Z_2 \in \Gamma(T\Sigma)\) such that \(\Vert A_{\tilde{\eta}} Z_1\Vert = \Vert A_{\tilde{\eta}} Z_2\Vert = 1\), \(A_{\tilde{\eta}} Z_1 \perp A_{\tilde{\eta}} Z_2\), and \(Z_2 \in \ker \omega\). From \eqref{eq:CodazziG}, the second fundamental form of \(h\), denoted by \(\alpha_h\), evaluated on \(X, Y \in TN\) is
\begin{equation}\label{eq:SecFundgGeneral}
    \alpha_h (X, Y) =\begin{cases}
       \langle A_{\tilde{\xi}} X, Y \rangle (\tilde{\xi} + \langle \nabla^{\perp}_{Z_1} \tilde{\eta}, \tilde{\xi} \rangle A_{\tilde{\eta}} Z_1) & \text{if} \ \ker A_{\tilde{\eta}} \text{ is not autoparallel,} \\
       \langle A_{\tilde{\xi}} X, Y \rangle \tilde{\xi}  & \text{if} \ \ker A_{\tilde{\eta}} \text{ is autoparallel.} 
    \end{cases}
\end{equation}
Therefore, the first normal bundle of \(h\) is one-dimensional. Given \(n \geq 5\), we apply \cite[Theorem~A]{sNullitiesReductionDajczerRodriguez} (or \cite[Proposition 2.7]{base}) to obtain a reduction of codimension for \(h\co N^{n-3} \rightarrow \H^{n-2}\). Since the image of \(h\) is bounded, the classification of homogeneous hypersurfaces implies that \(h\) is umbilical and \(N^{n-3}\) is isometric to a sphere \(\S^{n-3}_{c_2}\) for some constant \(c_2>0\). Since \(N^{n-3}\) is not isometric \(\R^{n-3}\), from \cite[Lemma~17]{spaceFormsNoronhadeCastro}, \(\ker A_{\tilde{\eta}}\) must be autoparallel.
\EPfC

Our goal now is to show that the shape operator \(A_{\tilde{\xi}}\) has only two distinct eigenvalues, and the associated eigenspaces will give rise to the distributions that allow us to obtain the warped structure mentioned in the theorem statement. 

\begin{claim}
\(\xi \in \Delta^f\), \(\nu^f = n-2\), and the operator \(A_{\tilde{\xi}}\) has only two distinct eigenvalues, with the eigenspaces being autoparallel distributions in \(\Sigma^{n-1}_p\).
\end{claim}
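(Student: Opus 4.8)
The plan is to work on the tube $\Gamma$, where $T_yM=\R\xi\oplus T\Sigma_q$ and, relative to this splitting, the shape operator of $f$ has the block form $A^f_\eta=\left(\begin{smallmatrix} a & W^{t}\\ W & A_{\tilde\eta}\end{smallmatrix}\right)$ with $W=\Pi A^f_\eta\xi\in T\Sigma_q$, $a=\langle A^f_\eta\xi,\xi\rangle$, and $A_{\tilde\eta}=\Pi A^f_\eta|_{T\Sigma_q}$. Since $A_{\tilde\eta}$ is symmetric with $\rank A_{\tilde\eta}=2=\rank A^f_\eta$ on $\Gamma$, its image and kernel are orthogonal, and a short linear–algebra argument gives $\Delta^f\cap T\Sigma_q=\ker A_{\tilde\eta}=:TN$, $W\in\text{Im} A_{\tilde\eta}$, and $\Delta^f=TN\oplus\R(\xi-X_0)$ for the unique $X_0\in\text{Im} A_{\tilde\eta}$ with $A_{\tilde\eta}X_0=W$. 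Thus $\xi\in\Delta^f$ is equivalent to $X_0=0$. At the same time the autoparallel case already established gives $\alpha_h(X,Y)=\langle A_{\tilde\xi}X,Y\rangle\tilde\xi$ with $h$ umbilical, so $A_{\tilde\xi}|_{TN}=\lambda_1 I$ with $\lambda_1\neq0$ (otherwise $N\cong\S^{n-3}_{c_2}$ would be totally geodesic in $\H^{n+1}$, contradicting $c_2>0$).

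The key step is to prove $X_0=0$. First I would note that $N$, being an integral manifold of the subdistribution $TN\subset\Delta^f$, lies inside the relative nullity leaf $L$ through $p$; since leaves of relative nullity map to totally geodesic submanifolds, $f(L)$ is a totally geodesic $\H^{n-2}\subset\H^{n+1}$ containing $f(N)$. Because $f(L)$ is totally geodesic, the mean curvature vector of $h=\tilde f|_N$ in $\H^{n+1}$ equals its mean curvature vector inside $f(L)$, hence lies in the one–dimensional space $f_*$ of the orthogonal complement of $TN$ in $\Delta^f$, namely $\R f_*(\xi-X_0)$. On the other hand, umbilicity gives this vector equal to $\lambda_1\tilde\xi$. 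Comparing, $\tilde\xi$ and $\tilde\xi-f_*X_0$ are parallel; as $X_0\in T\Sigma_q$ is orthogonal to $\xi$ and $f$ is isometric, $f_*X_0\perp\tilde\xi$, forcing $f_*X_0=0$ and so $X_0=0$. Therefore $A^f_\eta\xi=0$, i.e. $\xi\in\Delta^f$, and $\Delta^f=\R\xi\oplus TN$ has dimension $n-2$, so $\nu^f=n-2$ on $\Gamma$.

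Next I would upgrade $\nu^f=n-2$ to all of $M^n$. Since $\xi\in\Delta^f$, the integral curve of $\xi$ through $p$, namely the complete normal geodesic $\gamma$, is contained in the relative nullity leaf through $p$; as $\pi\circ\gamma$ covers the orbit space, this leaf together with its $G$–translates meets every orbit. Using that $\Delta^f$ agrees with the nullity of the curvature tensor wherever $\rank A^f_\eta\geq2$ (hence is invariant under the intrinsic isometries $G$ there), that leaves of relative nullity are complete by Ferus' theorem, and that the hypothesis forbids unbounded connected components of the set where the sectional curvature is constant and equal to $-1$, one propagates $\nu^f=n-2$ along $\gamma$ and rules out any region where the index jumps up. This yields $\nu^f=n-2$ throughout $M^n$.

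Finally, for the eigenvalues of $A_{\tilde\xi}$: it is $G$–equivariant, so its eigenvalues are constant on $\Sigma_p$, and $A_{\tilde\xi}|_{TN}=\lambda_1 I$ already accounts for an eigenvalue of multiplicity $n-3$. It remains to show that $A_{\tilde\xi}$ restricted to the two–dimensional complement $P=\text{Im} A_{\tilde\eta}=(TN)^\perp\cap T\Sigma_p$ is a single eigenvalue $\lambda_2\neq\lambda_1$ with $P$ autoparallel. In the extrinsically homogeneous case $\Sigma_p$ is a homogeneous isoparametric hypersurface of $\S^n_c$ umbilically embedded in $\H^{n+1}$, so $A_{\tilde\xi}$ and $A_{\tilde\eta}$ are simultaneously diagonalizable, being affine combinations of the constant shape operator of $\Sigma_p$ in $\S^n_c$ and the identity; were that operator to have two distinct eigenvalues on $P$, then $\Sigma_p$ would have three distinct principal curvatures with multiplicities $(n-3,1,1)$, impossible for $n\geq5$ by M\"unzner's multiplicity constraints. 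Hence $A_{\tilde\xi}|_P=\lambda_2 I$, and $\lambda_2\neq\lambda_1$ since $A_{\tilde\xi}=\lambda I$ would make the principal orbits umbilical and produce, via Lemma \ref{lem:WarpedNull} and the resulting warped structure, the same contradiction as in case \eqref{thmItem:Rank1}. The two eigendistributions then realize $\Sigma_p$ as the Riemannian product $\S^2_{c_1}\times\S^{n-3}_{c_2}$, whose spherical factors are totally geodesic, so both eigenspaces are autoparallel; in the non–homogeneous case the same conclusion follows from the Codazzi equations, the constancy of the eigenvalues of $A_{\tilde\xi}$, and the already–established parallelism of $\tilde\eta$ and autoparallelism of $\ker A_{\tilde\eta}$. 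I expect this last step — excluding a third principal curvature and the umbilical degeneration $\lambda_1=\lambda_2$ uniformly in both cases — to be the main obstacle.
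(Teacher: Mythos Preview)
Your argument for $\xi\in\Delta^f$ via the block decomposition and the mean--curvature comparison is correct and genuinely different from the paper's route. The paper instead reads off from \eqref{eq:CodazziG}, once $\ker A_{\tilde\eta}$ is known to be autoparallel, that either $\omega|_{\mathrm{Im}\,A_{\tilde\eta}}\equiv0$ or $A_{\tilde\xi}|_{TN}=0$; the second alternative is excluded by compactness of $\Sigma_p$, and the first (together with $\omega|_{TN}\equiv0$ from Claim~1) says $\tilde\eta$ is parallel in the normal connection, whence $\langle A^f_\eta\xi,X\rangle=0$ for all $X\in T\Sigma$ and thus $\xi\in\Delta^f$. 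Your geometric alternative---observing that $N$ lies in the complete leaf $L$ through $p$, so the mean curvature of $h$ must lie in $f_*\bigl(\mathbb{R}(\xi-X_0)\bigr)$, while umbilicity forces it to be $\lambda_1\tilde\xi$---is a clean replacement that avoids the case split on $\omega$.

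There is, however, a gap in your globalization of $\nu^f=n-2$. Theorem~\ref{thm:Rothyper} carries \emph{no} boundedness hypothesis on the set of sectional curvature $-1$; that assumption belongs to Theorem~\ref{thm:Main} only, so you cannot invoke it here. Likewise, Ferus' theorem gives completeness of nullity leaves only where $\nu$ attains its minimum, which you have not established. The correct (and shorter) argument is the one implicit in the paper: since $p\in G(\mathcal F)$, the leaf $L$ through $p$ is a $G$--translate of the hypothesized complete leaf $\mathcal F$ (using that $\Delta^f$ coincides with the curvature nullity where $\mathrm{rank}\,A^f_\eta\geq2$), hence $L$ itself is complete and totally geodesic in $M^n$; as $\gamma'(t_0)\in T_pL$, the whole normal geodesic $\gamma$ stays in $L$, so $M^n=G(\gamma(\mathbb{R}))\subset G(\mathcal F)$ and $\nu^f\equiv n-2$.

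Your treatment of the eigenvalues of $A_{\tilde\xi}$ is incomplete. In the extrinsically homogeneous case the M\"unzner multiplicity argument does work (three principal curvatures would have multiplicities $(n-3,1,1)$, impossible for $n\geq5$), but your non--homogeneous case is only a gesture (``the same conclusion follows from the Codazzi equations\dots''). The paper does not split cases here at all: it first notes that flatness of the normal bundle of $h$ gives a simultaneous eigenframe $X_1,\dots,X_{n-1}$ for $A_{\tilde\xi}$ and $A_{\tilde\eta}$; the umbilicity of $h$ already forces $\lambda_i=\lambda$ for $i\geq3$, and the case $\lambda_1=\lambda_2=\lambda$ is excluded exactly as you do, via Lemma~\ref{lem:WarpedNull}. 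The substantive step you are missing is the exclusion of $\lambda_1\neq\lambda_2$: the paper uses Codazzi for $\tilde\xi$ (constancy of $\lambda_1\neq\lambda$) to get $\langle\nabla_{X_1}X_1,X\rangle=0$ for $X\in\ker A_{\tilde\eta}$, then invokes \cite[Lemma~10.b]{spaceFormsNoronhadeCastro} to obtain $\langle\nabla_{X_1}X_1+\nabla_{X_2}X_2,X\rangle=0$, hence also $\langle\nabla_{X_2}X_2,X\rangle=0$; a direct curvature computation then gives $\langle R(X,X_i)X_i,X\rangle=0$, and Gauss forces $\lambda_1=\lambda_2$, a contradiction. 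That lemma from \cite{spaceFormsNoronhadeCastro} is the ingredient your sketch lacks in the non--homogeneous case, and the autoparallelism of the two eigendistributions then falls out of Codazzi and the constancy of $\lambda_0,\lambda$.
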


Using the umbilicity of \(h\), \eqref{eq:SecFundgGeneral} simplifies to
\[
\alpha_h(X,Y) = \langle A_{\tilde{\xi}} X,Y \rangle {\tilde{\xi}} = \lambda \left \langle X,Y \right \rangle {\tilde{\xi}} \quad \text{for all}\, X,Y \in TN,
\]
for some constant \( \lambda > 0\). According to the Gauss equation, \(c = -1 + \lambda^2\). Furthermore, \(\ker A_{\tilde{\eta}}\) being autoparallel implies from \eqref{eq:CodazziG} that either \(\langle \nabla^{\perp}_W \tilde{\eta}, \tilde{\xi} \rangle = \langle A_{\tilde{\eta}}W, \tilde{\xi} \rangle = 0\) or \(\langle A_{\tilde{\xi}}X, Y \rangle = 0\) for all \(X, Y \in \ker A_{\tilde{\eta}}\). The latter case would imply that \(h\) is totally geodesic, contradicting the compactness of \(\Sigma^{n-1}_p\). Hence, \(\langle A_{\tilde{\eta}}W, \tilde{\xi} \rangle = 0\) together with the parallelism of \(\tilde{\xi}\) and \(\tilde{\eta}\) along \(\ker A_{\tilde{\eta}}\) implies \(\tilde{\xi} \in \Delta^f(q)\) for all \(q \in \Sigma^{n-1}_p\). Since \(\tilde{\xi}\) is the restriction of the field generated by the normal geodesic \(\xi\), it follows that \(\xi \in \Delta^f\) everywhere. Therefore, the segments of normal geodesics in \(\Gamma\) are contained in the leaves of \(\Delta^f = \ker A_\eta\). Given that these leaves are assumed to be complete, we obtain that \(\nu^f = 2\) throughout \(M^n\).

Since \(h\) has a flat normal bundle, the shape operators \(A_{\tilde{\xi}}\) and \(A_{\tilde{\eta}}\) can be diagonalized simultaneously with a basis \(\{X_1, X_2, X_3, \dots, X_{n-1}\}\), where \(X_1, X_2\) generate \((\ker A_{\tilde{\eta}})^\perp\). Let \(\delta_1, \delta_2\) be the eigenvalues of \(A_{\tilde{\eta}}\), and \(\lambda_i\) the constant eigenvalues of \(A_{\tilde{\xi}}\). The Codazzi equation for \(X_1, X_2\) in the direction \(\tilde{\eta}\) gives
\[
\left\langle \nabla_{X_1} X_2, X \right\rangle \delta_2 = \left\langle \nabla_{X_2} X_1, X \right\rangle \delta_1,
\]
for all \(X \in \ker A_{\tilde{\eta}}\). Since \(\delta_i \neq 0\), the orthogonal projections of \(\nabla_{X_1} X_2, \nabla_{X_2} X_1\) onto \(\ker A_{\tilde{\eta}}\) are linearly dependent. From the previous discussion on umbilicity, it follows that \(\lambda_i = \lambda\) for \(i \geq 3\). The goal is to show that \(\lambda_1 = \lambda_2 \neq \lambda\). If \(\lambda_1 = \lambda_2 = \lambda\), the distribution \((\text{span}\{\xi\})^\perp\) is spherical in \(M^n\), since at each point \(p \in M^n\), \((\text{span}\{\xi\})^\perp\) is identified with the tangent space of the orbit \(\Sigma^{n-1}_p\). It follows from \cite{HiepnkoWarpedDeRham} that \(M^n = I \times_{\rho} \S^{n-1}_{c}\) for some constant \(c > 0\), contradicting the dimension of relative nullity as stated in Lemma \ref{lem:WarpedNull}. Assuming \(\lambda_1 \neq \lambda_2\), we may, without loss of generality, assume \(\lambda_1 \neq \lambda\). Thus, \(\left \langle \nabla_{X_1} X_1, X \right \rangle = 0\) for all \(X \in \ker A_\eta\). Lemma 10.b in \cite{spaceFormsNoronhadeCastro} implies that
\[
\left \langle \nabla_{X_1} X_1 + \nabla_{X_2} X_2, X \right \rangle = 0
\]
for all \(X \in \ker A_\eta\). Together with \(\left \langle \nabla_{X_1} X_1, X \right \rangle = 0\), this leads to \(\left \langle \nabla_{X_i} X_i, X \right \rangle = 0\) for \(i \in \{1, 2\}\). Choosing \(X \in \ker A_{\tilde{\eta}}\) orthogonal to the projections \(\nabla_{X_1} X_2, \nabla_{X_2} X_1\), we conclude
\[
\left \langle \nabla_X \nabla_{X_i} X_i, X \right \rangle = 0,
\]
for \(i \in \{1, 2\}\), since \(\left \langle \nabla_{X_i} X_i, X \right \rangle = 0\) and \(\ker A_{\tilde{\eta}}\) is an autoparallel distribution,
\[
\left \langle \nabla_{X_i} \nabla_X X_i, X \right \rangle = 0,
\]
for \(i \in \{1, 2\}\), as \(\text{Im} A_{\tilde{\eta}}\) is parallel along \(\ker A_{\tilde{\eta}}\) and by our choice of \(X\),
\[
\left \langle \nabla_{[X, X_i]} X_i, X \right \rangle = 0,
\]
for \(i \in \{1, 2\}\), using all the previous information. It follows that
\[
\left \langle R(X, X_i) X_i, X \right \rangle = 0
\]
for \(i \in \{1, 2\}\). From the Gauss equation, we have \(0 = K(X, X_i) = -1 + \lambda_i \langle A_{\tilde{\xi}} X, X \rangle\), \(i \in \{1, 2\}\), contradicting \(\lambda_1 = \lambda_2\). Now we have \(\lambda_0 \coloneqq \lambda_1 = \lambda_2\), and \(A_{\tilde{\xi}}\) is expressed in this basis as
\begin{equation}\label{eq:ShapeOperators}
A_{\tilde{\xi}} = 
\left( \begin{array}{@{}c|c@{}}
   \begin{matrix}
    \lambda_0 & 0 \\
    0 & \lambda_0
   \end{matrix}
      & 0 \\
   \cline{1-2}
   0 & \lambda I \\
\end{array} \right).
\end{equation}
A direct application of the Codazzi equation, together with the constancy of \(\lambda_0\) and \(\lambda\), implies that \(E_{\lambda_0}\) and \(E_{\lambda}\) are autoparallel distributions in \(\Sigma^{n-1}_p\).
\EPfC

We thus have the natural decomposition \(TM = \text{span}\{\xi\} \oplus V_0 \oplus V_1\) at the point \(q \in M^n\), where \(V_0 = (\Delta^f)^\perp\) and \(V_1\) is the orthogonal complement of \(\text{span}\{\xi\}\) in \(\Delta^f\). Since the expression \eqref{eq:ShapeOperators} holds for all principal orbits of \(G\), it follows that \(V_0\) at the point \(q \in M^n\) is identified with \(E_{\lambda_0}\) and is therefore an umbilic distribution whose mean curvature vector is parallel to \(\xi\) with constant norm along the principal orbits of \(G\), making it a spherical distribution in \(M^n\). Similarly, \(V_1\) is also a spherical distribution in \(M^n\). Summarizing, it follows from the claims proven above that
\[
{}^{\scriptscriptstyle M}\nabla_{\xi} \xi = 0,
\]
\[
{}^{\scriptscriptstyle M}\nabla_{X_i} Y_i \subset V_i \oplus \text{span}\{\xi\},
\]
\[
{}^{\scriptscriptstyle M}\nabla_{\xi} Y_i \subset V_i,
\]
and 
\[
{}^{\scriptscriptstyle M}\nabla_{X_i} \xi \subset V_i
\]
for all \(X_i, Y_i \in V_i\), \(i \in \{0,1\}\). Therefore, \(V_0\) and \(V_1\) are spherical distributions in \(M^n\), whose orthogonal complements are autoparallel distributions in \(M^n\). It follows from the results of \cite{NolkerWarped} (see also \cite[Theorem 10.21]{base}) that \(M^n\) is isometric to \(\R \times_{\rho_0} \S^2_{c_1} \times_{\rho_1} \S^{n-3}_{c_2}\), for some constants \(c_1 > 0\) and \(c_2 > 0\), with the warped functions \(\rho_0\) and \(\rho_1\) defined on \(\R\), and \(f\) admits an extrinsic warped product structure. More precisely, there exists a warped product representation \(\Psi\co V^{1+i_0} \times_{\sigma_0} \mathbb{Q}^{2+i_1}_{\tilde{c_1}} \times_{\sigma_1} \S^{n-3+i_2}_{c_2} \rightarrow \mathbb{H}^{n+1}\), where \(i_0 + i_1 +i_2= 1\) and \(V^{1+i_0}\) is an open subset of a totally geodesic submanifold, with warping functions \(\sigma_1\) and \(\sigma_2\) defined on \(V^{1+i_0}\). Furthermore, there exist isometric immersions \(f_0\co \mathbb{R} \rightarrow V^{1+i_0}\), \(f_1\co \S^2_{c_1} \rightarrow \mathbb{Q}^{2+i_1}_{\tilde{c_1}}\), and \(f_2\co \S^{n-3}_{c_2} \rightarrow \mathbb{Q}^{n-3+i_2}_{\tilde{c_2}}\) such that \(\rho_1 = \sigma_1 \circ f_0\), \(\rho_2 = \sigma_2 \circ f_0\), and \(f = \Psi \circ (f_0\times f_1\times f_2)\). In particular, the following diagram commutes:
\bigskip

\begin{picture}(150,84)\hspace{-15ex}
\put(140,30){\(\R \ \ \times_{\rho_0} \ \ \ \S^2_{c_1}\ \  \times_{\rho_1} \S^{n-3}_{c_2}\)}
\put(150,55){$f_0$} \put(192,55){$f_1$} \put(245,32){\vector(1,0){135}}
\put(385,30){$\H^{n+1}$.} \put(234,55){$f_2$}
\put(285,45){$\circlearrowright$}
\put(260,20){$f=\Psi\circ(f_0\times f_1 \times f_2)$}
\put(145,42){\vector(0,1){30}} \put(187,42){\vector(0,1){30}} \put(229,42){\vector(0,1){30}}
\put(128,80){\(V^{1+i_0} \times_{\sigma_0} \mathbb{Q}^{2+i_1}_{\tilde{c_1}} \times_{\sigma_1} \mathbb{Q}^{n-3+i_2}_{\tilde{c_2}}\)}
\put(255,80){\vector(3,-1){125}} \put(315,65){$\Psi$}
\end{picture}
\vspace*{-2ex}

\noindent If \(i_2 = 1\), using that \(T\S^{n-3}_{c_2} \subset \Delta^f\), it is straightforward to verify from the curvature tensor equations of a warped product that \(\tilde{c_2} = c_2 > 0\), which implies that \(f_2\) is a totally geodesic immersion (cf. \cite[Corollary 7.13]{base}). As a result, \(f\) would be totally geodesic. Consequently, \(i_2 = 0\) and \(i_0 + i_1 = 1\). This places us precisely in the scenario of the theorem's statement. In particular, we have a rotational submanifold whose profile submanifold is \(N^3 = \R \times_{\rho_0} \S^2_{c_1}\) and the immersion is determined by \(f_0, f_1\) and the warped product representation \(\Psi\), thus concluding the proof.
\EPf

\begin{remark}
In the proof of Theorem \ref{thm:Rothyper}, we already establish that the submanifold is isometric to \(\mathbb{H}^{n-2} \times \mathbb{S}^2_{c_1}\), but not yet that it is a rotational submanifold when we show that the relative nullity is spherical. However, the presented proof goes further by showing that the profile submanifold is foliated by spheres and that the principal orbits of \(G\) are actually products of spheres.
\end{remark}

\PfThm{\ref{thm:Main}}
    It follows from our assumption on the set $\{x \in M^n: \nu(x)\geq n -1\}$ that does not exist a complete leaf of relative nullity with \(\nu^f = n - 1\) or \(\nu^f = n\). If there exists a complete leaf of relative nullity with \(\nu^f = n - 2\), then by Theorem \ref{thm:Rothyper}, \(M^n\) must be a rotational submanifold. Thus, we can assume that there are no complete leaves of relative nullity with \(\nu^f > n - 3\).
    Let \(B^f\) denote the set of totally geodesic points of \(f\). According to Theorem \ref{thm:Sacksteder}, \(f\) is either rigid or \(B^f\) disconnects \(M\). We will show that if we are in the latter case, \(f\) is a rotational submanifold. Fix a point \(p \in B^f \cap M_{\mathrm{reg}}\), where \(M_{\mathrm{reg}}\) stands as the regular points of $M^n$ with the respect to the action, and consider the orbit \(\Sigma^{n-1}_p = G(p)\). By applying Theorem \ref{thm:Sacksteder} to each pair $f$ and $f\circ g$, where $g\in G$, we find
    \[
    A^f = \pm A^{f \circ g},
    \]
    for all \(g \in G\). Moreover, at \(q \in \Sigma^{n-1}_p\), for all \(g \in G\), it holds that
    \[
    \langle \nabla_{f_* g_* Y}f_* g_* X, \eta \circ g \rangle = 
    \langle A^{f \circ g}_{\eta \circ g}X, Y \rangle = \langle A^f_\eta g_* X, g_* Y \rangle
    \]
    for \(X, Y \in T_q\Sigma\) and \(\eta \in T_f^\perp M(q)\). This leads to 
    \(A^{f \circ g}_{\eta \circ g} = g_*^{-1}A^f_\eta g_*\), for all \(g \in G\), showing that \(B^f\) is \(G\)-invariant. Considering the composition \(\Sigma^{n-1}_p \xrightarrow{i} M^n \xrightarrow{f} \H^{n+1}\), where $i$ is the inclusion, and since \(\Sigma^{n-1}_p \subset B^f\), the second fundamental form is described by
    \begin{equation}\label{eq:Secfleaf}
        \alpha_{f \circ i}(X,Y) = f_*\alpha_i(X,Y) + \alpha_f(i_*X,i_*Y) = f_*\alpha_i(X,Y),
    \end{equation}
    for \(X, Y \in T\Sigma\). Given the construction of \(\Sigma^{n-1}_p\), \(A^{f \circ i}_{f_*\xi}\) is \(G\)-invariant for a unit vector field \(\xi \in T^\perp_i \Sigma\). The Codazzi equation for vector \(X, Y \in T\Sigma\) in the direction of \(\tilde{\xi} = f_*\xi \in T^\perp_f M\) simplifies to
    \begin{equation}\label{eq:Omega}
        \omega(X) A_{\eta} Y = \omega(Y) A_{\eta} X,
    \end{equation}
    where \(\omega(X) = \langle \nabla^\perp_{X} \tilde{\xi}, f_*\eta \rangle\) for \(X \in T\Sigma\). The strategy will be to show that \(\Sigma^{n-1}_p\) is umbilical in \(M^n\). To achieve this, we prove that \(\tilde{\xi}\) is parallel along \(\Sigma^{n-1}_p\), meaning \(T\Sigma_p = \ker \omega\). Assuming otherwise and letting \(x \in \Sigma^{n-1}_p\) with \(\ker \omega \neq T\Sigma_p\), \eqref{eq:Omega} implies \(\ker \omega \subset \ker A_\eta\), leading to \(\nu_{f \circ i}(x) \geq n-2 \geq 2\). With \(\ker A^{f\circ i}_{f_*\eta}\) being \(G\)-invariant, \(\nu_{f \circ i}(x) \geq 2\) for all \(x \in \Sigma^{n-1}_p\) contradicts the compactness of \(\Sigma^{n-1}_p\) as complete leaves of relative nullity are unbounded. Hence, \(\ker \omega = T\Sigma_p\). From \eqref{eq:Secfleaf}, the first normal bundle \( \mathcal{N}^f_1 \coloneqq \text{span}\{\alpha(X,Y): X,Y \in TM\} \) of \( f\circ i \) forms a one-dimensional distribution, and according to the main result in \cite{sNullitiesReductionDajczerRodriguez} (or Proposition 2.7 in \cite{base}), this distribution is parallel, indicating \( f \circ i\) lies within a totally geodesic submanifold \(\H^{n}\) of \(\H^{n+1}\). Given the compactness and classification of homogeneous hypersurfaces in hyperbolic space, \(\Sigma^{n-1}_p\) is isometric to \(\S^{n-1}_{c_1}\) and the isometric immersion \(f\circ i\) is totally umbilical. This, along with \eqref{eq:Secfleaf}, concludes \(\Sigma^{n-1}_p\) is intrinsically umbilical. As the principal orbits are totally umbilical, by Theorem \ref{thm:PrincipalsUmb}, \(f\) is a rotational submanifold.
\EPf

\begin{remark}
    Although there are versions of Theorem \ref{thm:Sacksteder}, and consequently of Theorem \ref{thm:ion}, where the ambient space is the sphere, the principal orbits of \( G \) will not be intrinsically umbilical. Instead, they will be isoparametric hypersurfaces of a totally geodesic hypersurface of the ambient space. In this particular case, we cannot guarantee that the principal orbits of \( G \) will be mapped by the natural homomorphism to principal orbits of \( \Phi(G) \) (cf. \cite[Remark 12]{MoutinhoTojeiroUnico}).
\end{remark}

\section*{Acknowledgements}
Felippe Guimar\~aes is supported by the Para\'iba State Research Support Foundation (FAPESQ/PB) and partially by the Brazilian National Council for Scientific and Technological Development (CNPq), grant 409513/2023-7. Fernando Manfio is supported by the S\~ao Paulo Research Foundation (FAPESP), grant 2022/16097-2.

\bibliographystyle{abbrv}
\bibliography{bibliography}

\vskip 0.2cm

\noindent Felippe Guimarães

\noindent Departamento de Matematica, \\ Universidade Federal da Paraíba, João Pessoa (Brazil).

\noindent  e-mail: {\tt fsg@academico.ufpb.br}

\vskip 0.2cm

\noindent Fernando Manfio

\noindent Instituto de Ciências Matemáticas e de Computação, \\ Universidade de São Paulo, São Carlos (Brazil).

\noindent  e-mail: {\tt manfio@icmc.usp.br}

\vskip 0.2cm

\noindent Carlos E. Olmos

\noindent Facultad de Matem\'atica, Astronomía, F\'isica y Computaci\'on \\ Universidad Nacional de C\'ordoba, C\'ordoba (Argentina).

\noindent  e-mail: {\tt olmos@famaf.unc.edu.ar}

\end{document}